\title{A characterization of barycentrically preassociative functions}
\author{Jean-Luc Marichal}
\address{Mathematics Research Unit, FSTC, University of Luxembourg \\
6, rue Coudenhove-Kalergi, L-1359 Luxembourg, Luxembourg} \email{jean-luc.marichal[at]uni.lu}
\author{Bruno Teheux}
\address{Mathematics Research Unit, FSTC, University of Luxembourg \\
6, rue Coudenhove-Kalergi, L-1359 Luxembourg, Luxembourg} \email{bruno.teheux[at]uni.lu}
\date{September 22, 2015}
\begin{document}

\theoremstyle{plain}
\newtheorem{theorem}{Theorem}
\newtheorem{lemma}[theorem]{Lemma}
\newtheorem{proposition}[theorem]{Proposition}
\newtheorem{corollary}[theorem]{Corollary}
\newtheorem{fact}[theorem]{Fact}
\newtheorem*{main}{Main Theorem}

\theoremstyle{definition}
\newtheorem{definition}[theorem]{Definition}
\newtheorem{example}[theorem]{Example}

\theoremstyle{remark}
\newtheorem*{conjecture}{Conjecture}
\newtheorem{remark}{Remark}
\newtheorem{claim}{Claim}

\newcommand{\N}{\mathbb{N}}
\newcommand{\Q}{\mathbb{Q}}
\newcommand{\R}{\mathbb{R}}

\newcommand{\ran}{\mathrm{ran}}
\newcommand{\dom}{\mathrm{dom}}
\newcommand{\id}{\mathrm{id}}
\newcommand{\med}{\mathrm{med}}
\newcommand{\ofo}{\mathrm{ofo}}
\newcommand{\Ast}{\boldsymbol{\ast}}

\newcommand{\bfu}{\mathbf{u}}
\newcommand{\bfv}{\mathbf{v}}
\newcommand{\bfw}{\mathbf{w}}
\newcommand{\bfx}{\mathbf{x}}
\newcommand{\bfy}{\mathbf{y}}
\newcommand{\bfz}{\mathbf{z}}

\newcommand{\length}[1]{{\vert #1 \vert}}

\newcommand\restr[2]{{
  \left.\kern-\nulldelimiterspace 
  #1 
  \right|_{#2} 
  }}

\begin{abstract}
We provide a characterization of the variadic functions which are barycentrically preassociative as compositions of length-preserving associative string functions with one-to-one unary maps. We also discuss some consequences of this characterization.
\end{abstract}

\keywords{Associativity, barycentric associativity, barycentric preassociativity, string functions, functional equation, axiomatization}

\subjclass[2010]{39B72}

\maketitle

\section{Introduction}

Let $X$ and $Y$ be arbitrary nonempty sets. Throughout this paper we regard tuples $\bfx$ in $X^n$ as $n$-strings over $X$. We let $X^*=\bigcup_{n \geqslant 0} X^n$ be the set of all strings over $X$, with the convention that $X^0=\{\varepsilon\}$ (i.e., $\varepsilon$ denotes the unique $0$-string on $X$). We denote the elements of $X^*$ by bold roman letters $\bfx$, $\bfy$, $\bfz$. If we want to stress that such an element is a letter of $X$, we use non-bold italic letters $x$, $y$, $z$, etc. The \emph{length} of a string $\bfx$ is denoted by $|\bfx|$. For instance, $|\varepsilon|=0$. We endow the set $X^*$ with the concatenation operation, for which $\varepsilon$ is the neutral element, i.e., $\varepsilon\bfx =\bfx\varepsilon =\bfx$. For instance, if $\bfx\in X^m$ and $y\in X$, then $\bfx y\in X^{m+1}$. Moreover, for every string $\bfx$ and every integer $n\geqslant 0$, the power $\bfx^n$ stands for the string obtained by concatenating $n$ copies of $\bfx$. In particular we have $\bfx^0=\varepsilon$.

As usual, a map $F\colon X^n\to Y$ is said to be an \emph{$n$-ary function} (an \emph{$n$-ary operation on $X$} if $Y=X$). Also, a map $F\colon X^*\to Y$ is said to be a \emph{variadic function} (a \emph{string function on $X$} if $Y=X^*$; see \cite{LehMarTeh}). For every variadic function $F\colon X^*\to Y$ and every integer $n\geqslant 0$, we denote by $F_n$ the \emph{$n$-ary part} $F|_{X^n}$ of $F$.

Recall that a variadic function $F\colon X^*\to Y$ is said to be \emph{preassociative} \cite{MarTeh,MarTeh2} if, for any $\bfx,\bfy,\bfy',\bfz\in X^*$, we have
$$
F(\bfy) = F(\bfy') \quad\Rightarrow\quad F(\bfx\bfy\bfz) = F(\bfx\bfy'\bfz).
$$
Also, a variadic function $F\colon X^*\to Y$ is said to be \emph{barycentrically preassociative} (or \emph{B-preassociative} for short) \cite{MarTeh3} if, for any $\bfx,\bfy,\bfy',\bfz\in X^*$, we have
$$
|\bfy| = |\bfy'| \quad\mbox{and}\quad F(\bfy) = F(\bfy') \quad\Rightarrow\quad F(\bfx\bfy\bfz) = F(\bfx\bfy'\bfz).
$$

Contrary to preassociativity, B-preassociativity recalls the associativity-like property of the barycenter (just regard $F(\bfx)$ as the barycenter of a set $\bfx$ of identical homogeneous balls in $X=\R^n$). In descriptive statistics and aggregation function theory, this condition says that the aggregated value of a series of numerical values remains unchanged when modifying a bundle of these values without changing their partial aggregation.

B-preassociativity has been recently utilized by the authors in the following characterization of the \emph{quasi-arithmetic pre-mean functions}, thus generalizing the well-known Kolmogoroff-Nagumo's characterization of the quasi-arithmetic mean functions.

\begin{theorem}[\cite{MarTeh3}]\label{thm:KolmExt}
Let $\mathbb{I}$ be a nontrivial real interval, possibly unbounded. A function $F\colon \mathbb{I}^*\to\R$ is B-preassociative and, for every $n\geqslant 1$, the function $F_n$ is symmetric, continuous, and strictly increasing in each argument if and only if there are continuous and strictly increasing functions $f\colon\mathbb{I}\to\R$ and $f_n\colon\R\to\R$ $(n\geqslant 1)$ such that
$$
F_n(\bfx) ~=~ f_n\bigg(\frac{1}{n}\sum_{i=1}^n f(x_i)\bigg),\qquad n\geqslant 1.
$$
\end{theorem}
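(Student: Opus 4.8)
The plan is to establish the two implications separately: sufficiency is a routine verification, while necessity is obtained by reducing to the classical Kolmogoroff--Nagumo theorem after normalizing $F$ into a family of reflexive means.

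For the sufficiency, assume that $F_n(\bfx)=f_n\big(\frac{1}{n}\sum_{i=1}^n f(x_i)\big)$ for some continuous and strictly increasing $f\colon\mathbb{I}\to\R$ and $f_n\colon\R\to\R$. Then each $F_n$ is immediately seen to be symmetric, continuous, and strictly increasing in each argument. To check B-preassociativity, let $\bfx,\bfy,\bfy',\bfz$ be such that $|\bfy|=|\bfy'|=k$ and $F(\bfy)=F(\bfy')$. Since $f_k$ is injective, the latter equality forces $\sum_{i=1}^k f(y_i)=\sum_{i=1}^k f(y_i')$; substituting this identity into the explicit formula for $F_{|\bfx|+k+|\bfz|}$ then gives $F(\bfx\bfy\bfz)=F(\bfx\bfy'\bfz)$.

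For the necessity, the key issue is that $F_n$ is $\R$-valued rather than $\mathbb{I}$-valued, so Kolmogoroff--Nagumo cannot be invoked directly. First I would set $\delta_n(x)=F_n(x,\dots,x)$, which is continuous and strictly increasing on $\mathbb{I}$; the monotonicity of $F_n$ in each argument together with the intermediate value theorem gives $\delta_n(\min_i x_i)\leqslant F_n(\bfx)\leqslant\delta_n(\max_i x_i)$, and hence $\ran F_n=\ran\delta_n$. Therefore $M_n:=\delta_n^{-1}\circ F_n\colon\mathbb{I}^n\to\mathbb{I}$ is well defined; it inherits symmetry and continuity from $F_n$, is strictly increasing in each argument, and satisfies $M_n(x,\dots,x)=x$. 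The decisive observation is that B-preassociativity becomes decomposability of the sequence $(M_n)$: if $m=M_k(x_1,\dots,x_k)$ then $F_k(m,\dots,m)=\delta_k(m)=F_k(x_1,\dots,x_k)$, so applying B-preassociativity with $\bfx=\varepsilon$, $\bfy=(x_1,\dots,x_k)$, $\bfy'$ the $k$-string $(m,\dots,m)$, and $\bfz=(x_{k+1},\dots,x_n)$, and then composing with $\delta_n^{-1}$, yields $M_n(x_1,\dots,x_n)=M_n(m,\dots,m,x_{k+1},\dots,x_n)$ for all $1\leqslant k\leqslant n$.

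At this point $(M_n)$ satisfies all the hypotheses of the classical Kolmogoroff--Nagumo theorem (in the form valid for an arbitrary real interval), so there is a continuous strictly monotone $g\colon\mathbb{I}\to\R$ with $M_n(\bfx)=g^{-1}\big(\frac{1}{n}\sum_{i=1}^n g(x_i)\big)$. Since $F_n=\delta_n\circ M_n$, this gives $F_n(\bfx)=\delta_n\big(g^{-1}(\frac{1}{n}\sum_{i=1}^n g(x_i))\big)$; taking $f=g$ if $g$ is increasing and $f=-g$ otherwise, and letting $f_n$ be the corresponding rescaling of $\delta_n\circ g^{-1}$ (extended from the interval $\ran f$ to all of $\R$ by an affine extension beyond the endpoints), both $f$ and $f_n$ are continuous and strictly increasing, and they realize the desired representation. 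I expect the only genuinely delicate step to be this normalization: one must verify that dividing out by the diagonal section $\delta_n$ produces reflexive $\mathbb{I}$-valued means, and that B-preassociativity is precisely what is needed to recover Kolmogoroff--Nagumo's decomposability after the change of scale; everything else is bookkeeping.
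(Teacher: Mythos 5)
Your strategy for the necessity --- normalizing each $F_n$ by its diagonal section $\delta_n$ to obtain reflexive means $M_n=\delta_n^{-1}\circ F_n$, showing that B-preassociativity translates into decomposability of the sequence $(M_n)$, invoking Kolmogoroff--Nagumo for an arbitrary interval, and recomposing with $\delta_n$ --- is the right reduction and is essentially the route taken in \cite{MarTeh3}; the sufficiency direction, the identity $\ran(F_n)=\ran(\delta_n)$, and the derivation of decomposability from B-preassociativity are all correct as written (the version of Kolmogoroff--Nagumo for non-compact intervals does need a citation or a short exhaustion-by-compact-subintervals argument, but that is standard).

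The genuine gap is the step you wave off as bookkeeping: extending $f_n=\delta_n\circ g^{-1}$ from $J:=g(\mathbb{I})$ to a continuous, strictly increasing function on all of $\R$ ``by an affine extension beyond the endpoints.'' Such an extension need not exist, and no freedom in the choice of generator can restore it. Take $\mathbb{I}=\left]0,1\right[$ and $F_n(\bfx)=\ln\bigl(\frac{1}{n}\sum_{i=1}^n x_i\bigr)$; this satisfies every hypothesis (it is the arithmetic mean composed with the one-to-one map $\ln$, so Proposition~\ref{prop:leftcomp56} applies). Since $f_n$ must be one-to-one, the level sets of $\frac{1}{n}\sum_i f(x_i)$ must coincide with those of $F_n$, namely the sets $\sum_i x_i=c$; for $n=2$ this is Jensen's equation, and continuity forces $f(x)=ax+b$ with $a>0$. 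Hence $f_n(s)=\ln\frac{s-b}{a}$ on $\left]b,a+b\right[$, which tends to $-\infty$ as $s\to b^{+}$, so no strictly increasing real-valued $f_n$ defined at $s=b$ can restrict to it; and since quasi-arithmetic generators are unique up to affine transformations, no other choice of $g$ or $f$ helps. You must therefore either read the conclusion with $f_n$ defined only on $f(\mathbb{I})$ (in which case your proof closes with no extension step at all), or supply an argument that $\delta_n\circ g^{-1}$ has finite one-sided limits at the finite, non-attained endpoints of $J$ --- which, as the example shows, the stated hypotheses do not guarantee.
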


\begin{remark}
If we add the condition that every $F_n$ is idempotent (i.e., $F_n(x^n)=x$ for every $x\in X$) in Theorem~\ref{thm:KolmExt}, then we necessarily have $f_n=f^{-1}$ for every $n\geqslant 1$, thus reducing this result to Kolmogoroff-Nagumo's characterization of the quasi-arithmetic mean functions \cite{Kol30,Nag30}. However, there are also many non-idempotent quasi-arithmetic pre-mean functions. Taking for instance $f_n(x)=nx$ and $f(x)=x$ over the reals $\mathbb{I}=\R$, we obtain the sum function. Taking $f_n(x)=\exp(nx)$ and $f(x)=\ln(x)$ over $\mathbb{I}=\left]0,\infty\right[$, we obtain the product function.
\end{remark}

In this paper we show that B-preassociative functions can be factorized as compositions of length-preserving associative string functions with one-to-one unary maps. We also show how this factorization result generalizes a characterization of a noteworthy subclass of B-preassociative functions given by the authors in \cite{MarTeh3}. Finally, we mention some interesting consequences of this new characterization.

The terminology used throughout this paper is the following. The domain, range, and kernel of any function $f$ are denoted by $\dom(f)$, $\ran(f)$, and $\ker(f)$, respectively. The identity function on any nonempty set is denoted by $\id$. For every $n\geqslant 1$, the diagonal section $\delta_F\colon X\to Y$ of a function $F\colon X^n\to Y$ is defined as $\delta_F(x)=F(x^n)$.

\begin{remark}
Although B-preassociativity was recently defined by the authors \cite{MarTeh3}, the basic idea behind this definition goes back to 1931 when de Finetti \cite{deF31} introduced an associativity-like property for mean functions. Indeed, according to de Finetti, for a real function $F\colon\bigcup_{n\geqslant 1}\R^n\to\R$ to be considered as a mean, it is natural that it be ``associative'' in the following sense: for any $u\in X$ and any $\bfx,\bfy,\bfz\in X^*$ such that  $|\bfx\bfz|\geqslant 1$ and $|\bfy|\geqslant 1$, we have $F(\bfx\bfy\bfz) = F(\bfx u^{|\bfy|}\bfz)$ whenever $F(\bfy) = F(u^{|\bfy|})$.
\end{remark}

\section{Main results}

As mentioned in the introduction, in this section we mainly show that B-preassociative functions can be factorized as compositions of length-preserving associative string functions with one-to-one unary maps. This result is stated in Theorem~\ref{thm:fa7sfds}.

Recall that a string function $F\colon X^*\to X^*$ is said to be \emph{associative} \cite{LehMarTeh} if it satisfies the equation $F(\bfx\bfy\bfz)=F(\bfx F(\bfy)\bfz)$ for any $\bfx,\bfy,\bfz\in X^*$.

\begin{definition}
We say that a string function $F\colon X^*\to X^*$ is \emph{length-preserving} if $|F(\bfx)|=|\bfx|$ for every $\bfx\in X^*$, or equivalently, if $\ran(F_n)\subseteq X^n$ for every $n\geqslant 0$.
\end{definition}



Clearly, the identity function on $X^*$ is associative and length-preserving. The following example gives nontrivial instances of associative and length-preserving string functions. Further examples of associative string functions can be found in \cite{LehMarTeh}.


\begin{example}\label{ex:68vffd}
Let $(h_n)_{n\geqslant 1}$ be a sequence of unary operations on $X$. One can easily see that the length-preserving function $F\colon X^*\to X^*$ defined by $F_0(\varepsilon)=\varepsilon$ and
$$
F_n(x_1\cdots x_n) ~=~ h_1(x_1)\cdots h_n(x_n),\qquad n\geqslant 1,
$$
is associative if and only if $h_n\circ h_m=h_n$ for all $n,m\geqslant 1$ such that $m\leqslant n$. Using an elementary induction, one can also show that the latter condition is equivalent to $h_n\circ h_n = h_n$ and $h_{n+1}\circ h_n = h_{n+1}$ for every $n\geqslant 1$. To give an example, take any constant sequence $h_n=h$ such that $h\circ h=h$ (for instance, the positive part function $h(x)=x^+$ over $X=\R$). As a second example, consider the sequence $h_n$ of unary operations on $X=\{1,2,3,\ldots\}$ defined by $h_n(k)=1$ if $k\leqslant n+1$, and $h_n(k)=k$, otherwise.
\end{example}

\begin{proposition}\label{prop:s7f65}
Let $F\colon X^*\to X^*$ be a length-preserving function. Then $F$ is associative if and only if it is B-preassociative and satisfies $F_n=F_n\circ F_n$ for every $n\geqslant 0$.
\end{proposition}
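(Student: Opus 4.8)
The plan is to reduce both implications to a single observation: for a length-preserving $F$, the family of identities $F_n=F_n\circ F_n$ $(n\geqslant 0)$ is equivalent to the single ``string-idempotence'' condition $F\circ F=F$, i.e., $F(F(\bfx))=F(\bfx)$ for every $\bfx\in X^*$. Indeed, if $\bfx\in X^n$ then length-preservation forces $F(\bfx)\in X^n$, so $F(F(\bfx))=F_n(F_n(\bfx))$ while $F(\bfx)=F_n(\bfx)$; quantifying over $\bfx\in X^n$ and then over $n$ gives the equivalence (the case $n=0$ being trivial since necessarily $F_0(\varepsilon)=\varepsilon$).

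For the ``only if'' part, assume $F$ is associative. Setting $\bfx=\bfz=\varepsilon$ in the associativity equation $F(\bfx\bfy\bfz)=F(\bfx F(\bfy)\bfz)$ gives $F(\bfy)=F(F(\bfy))$ for every $\bfy$, so $F\circ F=F$ and hence $F_n=F_n\circ F_n$ for every $n$ by the observation above. To see that $F$ is B-preassociative (in fact preassociative), suppose $F(\bfy)=F(\bfy')$; applying associativity twice yields
$$
F(\bfx\bfy\bfz) ~=~ F(\bfx F(\bfy)\bfz) ~=~ F(\bfx F(\bfy')\bfz) ~=~ F(\bfx\bfy'\bfz).
$$

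For the ``if'' part, assume $F$ is length-preserving, B-preassociative, and satisfies $F_n=F_n\circ F_n$ for every $n$; by the observation, $F(F(\bfy))=F(\bfy)$ for every $\bfy\in X^*$. Fix $\bfx,\bfy,\bfz\in X^*$ and put $\bfy'=F(\bfy)$. Then $|\bfy'|=|F(\bfy)|=|\bfy|$ by length-preservation, and $F(\bfy')=F(F(\bfy))=F(\bfy)$, so the hypothesis of B-preassociativity is met for the pair $\bfy,\bfy'$. We conclude $F(\bfx\bfy\bfz)=F(\bfx\bfy'\bfz)=F(\bfx F(\bfy)\bfz)$, which is exactly associativity.

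There is essentially no serious obstacle here; the only point requiring minor care is that the length hypothesis $|\bfy|=|\bfy'|$ in the definition of B-preassociativity must be checked when the property is invoked, and this is precisely where length-preservation of $F$ enters in the ``if'' direction. It is also worth noting that the ``only if'' direction uses no length hypothesis at all: associativity alone implies preassociativity together with $F\circ F=F$.
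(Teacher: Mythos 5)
Your proof is correct and follows essentially the same route as the paper: the sufficiency direction hinges on the identical observation that $F(F(\bfy))=F(\bfy)$ lets you invoke B-preassociativity with $\bfy'=F(\bfy)$ (length-preservation supplying $|\bfy'|=|\bfy|$), and the necessity direction is the standard fact that associativity implies preassociativity, which the paper simply cites from the literature while you write out the two-line argument. No gaps; the extra care you take in checking the length hypothesis is exactly the point the paper leaves implicit.
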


\begin{proof}
To see that the necessity holds, we recall from \cite{LehMarTeh} that any associative string function is preassociative and hence B-preassociative. The second part of the statement is immediate. For the sufficiency, we merely observe that we have $F(F(\bfy))=F(\bfy)$ for every $\bfy\in X^*$ and therefore, by B-preassociativity, we also have $F(\bfx F(\bfy)\bfz)=F(\bfx\bfy\bfz)$ for every $\bfx\bfy\bfz\in X^*$, that is, $F$ is associative.
\end{proof}

The following proposition, established in \cite{MarTeh3}, shows how we can construct new B-preassociative functions from given B-preassociative functions.

\begin{proposition}[{\cite{MarTeh3}}]\label{prop:leftcomp56}
Let $F\colon X^*\to Y$ be a B-preassociative function and let $(g_n)_{n\geqslant 1}$ be a sequence of functions from $Y$ to a nonempty set $Y'$. If $g_n|_{\ran(F_n)}$ is one-to-one for every $n\geqslant 1$, then any function $H\colon X^*\to Y'$ such that $H_n=g_n\circ F_n$ for every $n\geqslant 1$ is B-preassociative.
\end{proposition}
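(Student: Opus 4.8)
The plan is to verify the definition of B-preassociativity for $H$ directly: use the injectivity hypothesis to pull an equality of $H$-values back to an equality of $F$-values, then invoke the B-preassociativity of $F$, and finally push the resulting equality forward through the appropriate $g_m$.

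First I would fix $\bfx,\bfy,\bfy',\bfz\in X^*$ with $|\bfy|=|\bfy'|$ and $H(\bfy)=H(\bfy')$, the goal being $H(\bfx\bfy\bfz)=H(\bfx\bfy'\bfz)$. If $|\bfy|=|\bfy'|=0$, then $\bfy=\bfy'=\varepsilon$ and both sides reduce to $H(\bfx\bfz)$, so the conclusion is immediate; hence I may assume $n:=|\bfy|=|\bfy'|\geqslant 1$.

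Next, since $H_n=g_n\circ F_n$, the hypothesis $H(\bfy)=H(\bfy')$ says exactly that $g_n(F(\bfy))=g_n(F(\bfy'))$. Because $F(\bfy)$ and $F(\bfy')$ both lie in $\ran(F_n)$ and $g_n|_{\ran(F_n)}$ is one-to-one, this forces $F(\bfy)=F(\bfy')$. Now $|\bfy|=|\bfy'|$ together with $F(\bfy)=F(\bfy')$ lets me apply the B-preassociativity of $F$ to obtain $F(\bfx\bfy\bfz)=F(\bfx\bfy'\bfz)$. Finally, put $m:=|\bfx\bfy\bfz|=|\bfx\bfy'\bfz|$; since $n\geqslant 1$ we have $m\geqslant 1$, so the identity $H_m=g_m\circ F_m$ is available and gives $H(\bfx\bfy\bfz)=g_m(F(\bfx\bfy\bfz))=g_m(F(\bfx\bfy'\bfz))=H(\bfx\bfy'\bfz)$, as desired.

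There is no real obstacle here; the argument is a routine chase through the definitions. The only point requiring a little care is the bookkeeping of string lengths: one must check that $m\geqslant 1$ so that the assumed relation $H_m=g_m\circ F_m$ (stated only for indices $\geqslant 1$) actually applies, and one must handle the degenerate case $\bfy=\bfy'=\varepsilon$ separately. It is also worth noting that injectivity of $g_n$ is needed only on $\ran(F_n)$, which is precisely where the values $F(\bfy)$ and $F(\bfy')$ live, so the hypothesis is used in full but nothing stronger is required.
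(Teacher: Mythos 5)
Your proof is correct and is the standard direct verification one would expect: pull $H(\bfy)=H(\bfy')$ back through the injective $g_n|_{\ran(F_n)}$, apply B-preassociativity of $F$, and push forward through $g_m$; the careful handling of the case $\bfy=\bfy'=\varepsilon$ and of the index bookkeeping is exactly right. The paper itself defers the proof to \cite{MarTeh3}, but your argument is the same routine definition chase that reference uses.
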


Recall that a function $g$ is a \emph{quasi-inverse} \cite[Sect.~2.1]{SchSkl83} of a function $f$ if
$$
f\circ \restr{g}{\ran(f)}=\restr{\id}{\ran(f)}\qquad\mbox{and}\qquad\ran(\restr{g}{\ran(f)})=\ran(g).
$$
We denote the set of quasi-inverses of a function $f$ by $Q(f)$. Under the assumption of the Axiom of Choice (AC), the set $Q(f)$ is nonempty for any function $f$. In fact, the Axiom of Choice is just another form of the statement ``every function has a quasi-inverse''. Note also that the relation of being quasi-inverse is symmetric: if $g \in Q(f)$ then $f \in Q(g)$; moreover, we have $\ran(g)\subseteq\dom(f)$ and $\ran(f)\subseteq\dom(g)$ and the functions $\restr{f}{\ran(g)}$ and $\restr{g}{\ran(f)}$ are one-to-one.

\begin{lemma}\label{lemma:f67s5fds}
Assume AC and let $F\colon X^n\to Y$ be a function. For any $g\in Q(F)$, define the function $H\colon X^n\to X^n$ by $H=g\circ F$. Then we have $F=F\circ H$ and $H=H\circ H$. Moreover, the map $F|_{\ran(H)}$ is one-to-one.
\end{lemma}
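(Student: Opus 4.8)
The plan is to unwind the two defining clauses of a quasi-inverse and then do a short chain of substitutions; I do not anticipate a genuine obstacle here, only the need to keep the domains of the successive compositions consistent. I would begin by checking that $H$ really is a self-map of $X^n$: since $g\in Q(F)$ we have $\ran(F)\subseteq\dom(g)$, so $g\circ F$ is defined, and $\ran(H)\subseteq\ran(g)\subseteq\dom(F)=X^n$.

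Next I would prove $F=F\circ H$. Take any $\bfx\in X^n$. Then $F(\bfx)\in\ran(F)$, so the first clause of the quasi-inverse definition, $F\circ\restr{g}{\ran(F)}=\restr{\id}{\ran(F)}$, gives $F(g(F(\bfx)))=F(\bfx)$, that is, $(F\circ H)(\bfx)=F(\bfx)$. Composing this identity on the left with $g$ (legitimate, since $\ran(F\circ H)=\ran(F)\subseteq\dom(g)$) yields $H=g\circ F=g\circ(F\circ H)=(g\circ F)\circ H=H\circ H$, which is the second assertion.

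Finally, for the injectivity of $\restr{F}{\ran(H)}$ I would simply observe that $\ran(H)\subseteq\ran(g)$ and invoke the remark preceding the lemma, by which $\restr{F}{\ran(g)}$ is one-to-one; a restriction of an injective map to a subset of its domain stays injective. Alternatively one argues by hand: if $\bfu,\bfv\in\ran(H)\subseteq\ran(g)=\ran(\restr{g}{\ran(F)})$, write $\bfu=g(a)$ and $\bfv=g(b)$ with $a,b\in\ran(F)$; then $F(\bfu)=a$ and $F(\bfv)=b$ by the first clause, so $F(\bfu)=F(\bfv)$ forces $a=b$ and hence $\bfu=\bfv$. The only subtle point is that both clauses of the quasi-inverse definition are used — the second one precisely to guarantee $\ran(g)=\ran(\restr{g}{\ran(F)})$ in this last step.
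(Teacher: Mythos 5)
Your proof is correct and follows essentially the same route as the paper's: both establish $F\circ H=F$ from the identity $F\circ g\circ F=F$, deduce $H\circ H=H$ by left-composing with $g$, and obtain injectivity of $F|_{\ran(H)}$ from the injectivity of $F|_{\ran(g)}$ noted in the remark preceding the lemma. Your additional care with domains and the explicit use of the second clause of the quasi-inverse definition are fine but not a different argument.
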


\begin{proof}
By definition of $H$ we have $F\circ H=F\circ g\circ F=F$ and $H\circ H=g\circ F\circ g\circ F=g\circ F=H$. Also, the map $F|_{\ran(H)}=F|_{\ran(g)}$ is one-to-one.
\end{proof}

\begin{lemma}\label{lemma:asdf7fsad}
Assume AC and let $F\colon X^*\to Y$ be a function. The following assertions are equivalent.
\begin{enumerate}
\item[(i)] $F$ is B-preassociative.

\item[(ii)] For every sequence $(g_n\in Q(F_n))_{n\geqslant 1}$, the function $H\colon X^*\to X^*$ defined by $H_0(\varepsilon)=\varepsilon$ and $H_n=g_n\circ F_n$ for every $n\geqslant 1$ is associative and length-preserving.

\item[(iii)] There exists a sequence $(g_n\in Q(F_n))_{n\geqslant 1}$ such that the function $H\colon X^*\to X^*$ defined by $H_0(\varepsilon)=\varepsilon$ and $H_n=g_n\circ F_n$ for every $n\geqslant 1$ is associative and length-preserving.
\end{enumerate}
\end{lemma}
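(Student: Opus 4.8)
The plan is to prove the cyclic chain of implications $(i)\Rightarrow(ii)\Rightarrow(iii)\Rightarrow(i)$. The implication $(ii)\Rightarrow(iii)$ is essentially free: AC guarantees that for each $n\geqslant 1$ the set $Q(F_n)$ is nonempty, so a sequence $(g_n)_{n\geqslant 1}$ exists, and then $(ii)$ applied to that sequence yields $(iii)$.

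For $(i)\Rightarrow(ii)$, fix an arbitrary sequence $(g_n\in Q(F_n))_{n\geqslant 1}$ and let $H$ be defined by $H_0(\varepsilon)=\varepsilon$ and $H_n=g_n\circ F_n$. Length-preservation is immediate: for $\bfx\in X^n$ we have $H_n(\bfx)=g_n(F_n(\bfx))\in\ran(g_n)$, and since $g_n\in Q(F_n)$ we have $\ran(g_n)=\ran(g_n|_{\ran(F_n)})\subseteq\dom(F_n)=X^n$, so $|H(\bfx)|=n$. For associativity I want to show $H(\bfx\bfy\bfz)=H(\bfx H(\bfy)\bfz)$ for all $\bfx,\bfy,\bfz\in X^*$. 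The key observation is that by Lemma~\ref{lemma:f67s5fds} (applied with the function $F_n$ where $n=|\bfy|$) we have $F_{|\bfy|}=F_{|\bfy|}\circ H_{|\bfy|}$, so $F(\bfy)=F(H(\bfy))$; moreover $|H(\bfy)|=|\bfy|$ by length-preservation. Now B-preassociativity of $F$ gives $F(\bfx\bfy\bfz)=F(\bfx H(\bfy)\bfz)$. Since $|\bfx\bfy\bfz|=|\bfx H(\bfy)\bfz|$, call this common length $m$, and applying the map $g_m$ to both sides yields $H(\bfx\bfy\bfz)=H(\bfx H(\bfy)\bfz)$, as required.

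For $(iii)\Rightarrow(i)$, assume there is a sequence $(g_n\in Q(F_n))_{n\geqslant 1}$ such that $H$ (defined as above) is associative and length-preserving. The natural route is to recover $F_n$ as a left composition of $H_n$ with a one-to-one map on $\ran(H_n)$ and then invoke Proposition~\ref{prop:leftcomp56}. Indeed, since $g_n\in Q(F_n)$, we have $f_n\in Q(g_n)$ by symmetry of the quasi-inverse relation, so Lemma~\ref{lemma:f67s5fds} applied to $g_n$ (with quasi-inverse $F_n$) gives $F_n\circ g_n\circ F_n=F_n$, i.e. $F_n=F_n\circ H_n$, and $F_n|_{\ran(H_n)}$ is one-to-one. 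Set $h_n:=F_n|_{\ran(H_n)}$; then $F_n=h_n\circ H_n$ holds on $X^n$ once we extend $h_n$ arbitrarily off $\ran(H_n)$. Since $H$ is associative and length-preserving, Proposition~\ref{prop:s7f65} tells us $H$ is B-preassociative (and $H_n=H_n\circ H_n$). Applying Proposition~\ref{prop:leftcomp56} with the B-preassociative function $H$, the maps $h_n$ (which are one-to-one on $\ran(H_n)$), and $Y'=Y$, we conclude that any function whose $n$-ary parts are $h_n\circ H_n=F_n$ is B-preassociative; in particular $F$ is B-preassociative.

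The main obstacle is the bookkeeping of ranges and domains in $(iii)\Rightarrow(i)$: one must verify that $H_n$ really does map into $X^n$ (so that composing with $F_n$ on the left is legitimate) and that $h_n=F_n|_{\ran(H_n)}$ is the right map to feed into Proposition~\ref{prop:leftcomp56} — this is exactly where symmetry of the quasi-inverse relation and the second equation of Lemma~\ref{lemma:f67s5fds} do the work. Everything else is a direct application of the preceding lemmas and propositions; the only genuinely new content is the two-line argument in $(i)\Rightarrow(ii)$ that passing $F(\bfy)=F(H(\bfy))$ through B-preassociativity and then through $g_m$ yields associativity of $H$.
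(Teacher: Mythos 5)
Your proof is correct, and two of the three implications follow the paper's argument exactly: $(ii)\Rightarrow(iii)$ is the same triviality (modulo your explicit appeal to AC for nonemptiness of $Q(F_n)$), and your $(iii)\Rightarrow(i)$ is the paper's argument --- recover $F_n$ as a one-to-one map composed with $H_n$ (your $h_n=F_n|_{\ran(H_n)}$ is literally the paper's $(g_n|_{\ran(F_n)})^{-1}$) and invoke Propositions~\ref{prop:s7f65} and~\ref{prop:leftcomp56}. The only real divergence is in $(i)\Rightarrow(ii)$: the paper first shows $H$ is B-preassociative (Proposition~\ref{prop:leftcomp56}) and idempotent (Lemma~\ref{lemma:f67s5fds}) and then concludes associativity from Proposition~\ref{prop:s7f65}, whereas you verify the associativity equation directly by pushing $F(\bfy)=F(H(\bfy))$ (which holds with $|H(\bfy)|=|\bfy|$) through the B-preassociativity of $F$ and then applying $g_m$ to both sides of $F(\bfx\bfy\bfz)=F(\bfx H(\bfy)\bfz)$. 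Your route is slightly more self-contained for that step, at the cost of redoing in-line the computation that Proposition~\ref{prop:s7f65} packages; the paper's route reuses its general machinery and makes the role of the idempotency $H=H\circ H$ explicit. Both are sound (the degenerate cases $|\bfy|=0$ or $|\bfx\bfy\bfz|=0$ in your direct argument are trivially fine), so this is a matter of presentation rather than substance.
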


\begin{proof}
(i) $\Rightarrow$ (ii). Let $H\colon X^*\to X^*$ be defined as indicated in the statement. We know by Lemma~\ref{lemma:f67s5fds} that $H\circ H=H$ and $H$ is length-preserving. Since $g_n|_{\ran(F_n)}$ is one-to-one, we have that $H$ is B-preassociative by Proposition~\ref{prop:leftcomp56}. It follows from Proposition~\ref{prop:s7f65} that $H$ is associative.

(ii) $\Rightarrow$ (iii). Trivial.

(iii) $\Rightarrow$ (i). By Proposition~\ref{prop:s7f65}, $H$ is B-preassociative. For every $n\geqslant 1$, since $g_n|_{\ran(F_n)}$ is a one-to-one map from $\ran(F_n)$ onto $\ran(g_n)=\ran(H_n)$, we have $F_n=(g_n|_{\ran(F_n)})^{-1}\circ H_n$. By Proposition~\ref{prop:leftcomp56} it follows that $F$ is B-preassociative.
\end{proof}

We are now ready to present our main result, which gives a characterization of any B-preassociative function as a composition of a length-preserving associative string function with one-to-one unary maps.

\begin{theorem}\label{thm:fa7sfds}
Assume AC and let $F\colon X^*\to Y$ be a function. The following assertions are equivalent.
\begin{enumerate}
\item[(i)] $F$ is B-preassociative.

\item[(ii)] There exist an associative and length-preserving function $H\colon X^*\to X^*$ and a sequence $(f_n)_{n\geqslant 1}$ of one-to-one functions $f_n\colon\ran(H_n)\to Y$ such that $F_n=f_n\circ H_n$ for every $n\geqslant 1$.
\end{enumerate}
If condition (ii) holds, then for every $n\geqslant 1$ we have $f_n=F|_{\ran(H_n)}=F_n|_{\ran(H_n)}$, $f_n^{-1}\in Q(F_n)$, and we may choose $H_n=g_n\circ F_n$ for any $g_n\in Q(F_n)$.
\end{theorem}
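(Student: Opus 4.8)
The plan is to derive Theorem~\ref{thm:fa7sfds} almost entirely from Lemma~\ref{lemma:asdf7fsad} together with the elementary facts about quasi-inverses collected above, so that very little new work is needed. For the implication (i) $\Rightarrow$ (ii): assuming $F$ is B-preassociative, invoke AC to pick a sequence $(g_n\in Q(F_n))_{n\geqslant 1}$, and let $H$ be the length-preserving string function with $H_0(\varepsilon)=\varepsilon$ and $H_n=g_n\circ F_n$; by Lemma~\ref{lemma:asdf7fsad} ((i) $\Rightarrow$ (ii)), $H$ is associative and length-preserving. It then remains to produce the one-to-one maps $f_n$. Here I would set $f_n=F_n|_{\ran(H_n)}$ and check two things: first, that $f_n$ is one-to-one, which is exactly the last assertion of Lemma~\ref{lemma:f67s5fds} (with $F$ there being $F_n$); second, that $F_n=f_n\circ H_n$, which follows from $F_n=F_n\circ H_n$ (the first identity in Lemma~\ref{lemma:f67s5fds}) since composing with $H_n$ lands in $\ran(H_n)$, so $F_n\circ H_n=(F_n|_{\ran(H_n)})\circ H_n=f_n\circ H_n$.

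For the implication (ii) $\Rightarrow$ (i): given an associative length-preserving $H$ and one-to-one maps $f_n\colon\ran(H_n)\to Y$ with $F_n=f_n\circ H_n$, observe that $H$ is B-preassociative (associative $\Rightarrow$ preassociative $\Rightarrow$ B-preassociative, as recalled in the proof of Proposition~\ref{prop:s7f65}), and that $f_n$ restricted to $\ran(H_n)$ is one-to-one by hypothesis. Proposition~\ref{prop:leftcomp56}, applied with $F$ replaced by $H$ and $g_n$ replaced by $f_n$ (extended arbitrarily to all of $X^n$ if one wants the $g_n$ to have full domain, or simply noting that only the restriction to $\ran(H_n)$ matters), then yields that $F$ is B-preassociative.

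For the final ``moreover'' clause, suppose (ii) holds. The identity $F_n=f_n\circ H_n$ together with $H_n=H_n\circ H_n$ (Proposition~\ref{prop:s7f65}, since $H$ is associative and length-preserving) gives, for $\bfx\in\ran(H_n)$, that $F_n(\bfx)=f_n(H_n(\bfx))$; but one must first show $H_n(\bfx)=\bfx$ for $\bfx\in\ran(H_n)$, which follows from $H_n\circ H_n=H_n$: if $\bfx=H_n(\bfy)$ then $H_n(\bfx)=H_n(H_n(\bfy))=H_n(\bfy)=\bfx$. Hence $f_n=F_n|_{\ran(H_n)}$, and this equals $F|_{\ran(H_n)}$ since $\ran(H_n)\subseteq X^n$ by length-preservation. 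Since $f_n$ is a one-to-one map from $\ran(H_n)$ onto $\ran(f_n)=\ran(F_n)$, its inverse $f_n^{-1}$ satisfies $F_n\circ f_n^{-1}=\id$ on $\ran(F_n)$ and $\ran(f_n^{-1})=\ran(H_n)=\ran(f_n^{-1}|_{\ran(F_n)})$, so $f_n^{-1}\in Q(F_n)$; and then $g_n\circ F_n=f_n^{-1}\circ F_n=H_n$ recovers $H_n$, while for an arbitrary $g_n\in Q(F_n)$ the string function with $n$-ary parts $g_n\circ F_n$ works by the (i) $\Rightarrow$ (ii) direction already proved. The only mild subtlety — and the closest thing to an obstacle — is bookkeeping the domains of the $f_n$ versus the $g_n$ (the $f_n$ are defined only on $\ran(H_n)$, whereas quasi-inverses and Proposition~\ref{prop:leftcomp56} are phrased with maps defined on all of $X^n$ or $Y$); this is handled by restricting/extending as needed and noting that all the relevant maps agree on the ranges that actually matter.
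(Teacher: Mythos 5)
Your proposal is correct and follows essentially the same route as the paper: it uses Lemma~\ref{lemma:f67s5fds} and Lemma~\ref{lemma:asdf7fsad} for (i)~$\Rightarrow$~(ii), Propositions~\ref{prop:s7f65} and~\ref{prop:leftcomp56} for (ii)~$\Rightarrow$~(i), and the idempotence $H_n\circ H_n=H_n$ for the final clause. The only (harmless) cosmetic difference is that you verify $f_n=F_n|_{\ran(H_n)}$ via $H_n|_{\ran(H_n)}=\id$ and check $f_n^{-1}\in Q(F_n)$ through the bijection $\ran(H_n)\to\ran(F_n)$, where the paper computes $F_n\circ f_n^{-1}\circ F_n=F_n$ directly.
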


\begin{proof}
(i) $\Rightarrow$ (ii). Let $H\colon X^*\to X^*$ be defined by $H_0(\varepsilon)=\varepsilon$ and $H_n=g_n\circ F_n$ for every $n\geqslant 1$, where $g_n\in Q(F_n)$. By Lemma~\ref{lemma:f67s5fds} we have $F_n=f_n\circ H_n$ for every $n\geqslant 1$, where $f_n=F_n|_{\ran(H_n)}$ is one-to-one. By Lemma~\ref{lemma:asdf7fsad}, $H$ is associative and length-preserving.

(ii) $\Rightarrow$ (i). $H$ is B-preassociative by Proposition~\ref{prop:s7f65}. By Proposition~\ref{prop:leftcomp56} it follows that also $F$ is B-preassociative.

If condition (ii) holds, then for every $n\geqslant 1$ we have $F_n\circ H_n=f_n\circ H_n\circ H_n=f_n\circ H_n$ and hence $F_n|_{\ran(H_n)}=f_n$. Moreover, since $f_n$ is one-to-one, we have $H_n=f_n^{-1}\circ F_n$ and hence $F_n\circ f_n^{-1}\circ F_n=F_n\circ H_n=f_n\circ H_n\circ H_n=f_n\circ H_n=F_n$, which shows that $f_n^{-1}\in Q(F_n)$.
\end{proof}

\begin{remark}
\begin{enumerate}
\item[(a)] It is clear that the trivial factorization $F_n=F_n\circ H_n$, where $H_n=\id$, holds for any function $F\colon X^*\to Y$. This observation could make us wrongly think that Theorem~\ref{thm:fa7sfds} is of no use. However, in our factorization $F_n=f_n\circ H_n$ the outer function $f_n$ has the important feature that it is one-to-one.

\item[(b)] Similarly to Theorem~\ref{thm:fa7sfds}, one can show \cite{LehMarTeh} that any preassociative function $F\colon X^*\to Y$ can be factorized as a composition $F=f\circ H$, where $H\colon X^*\to X^*$ is associative and $f\colon\ran(H)\to Y$ is one-to-one.
\end{enumerate}
\end{remark}

In the rest of this section we show how Theorem~\ref{thm:fa7sfds} can be particularized to some nested subclasses of B-preassociative functions, including the subclass of B-preassociative functions $F\colon X^*\to Y$ for which the equality $\ran(F_n)=\ran(\delta_{F_n})$ holds for every $n\geqslant 1$ (see \cite{MarTeh3}).

For any integers $m,n\geqslant 1$, define $X_m^0=X^0$ and
$$
X^n_m ~=~ \{\bfy z^{n-\min\{n,m\}+1}:\bfy z\in X^{\min\{n,m\}}\}.
$$
For instance $X^3_1=\{z^3: z\in X\}$, $X^3_2 = \{y z^2:y z\in X^2\}$, and $X^3_m = X^3$ for every $m\geqslant 3$.

Thus, we have $X^n_m=X^n$ if $m\geqslant n$ and $X^n_m = \{\bfy z^{n-m+1}:\bfy z\in X^m\}$ if $m\leqslant n$. It follows that for every $m\geqslant 1$ we have $X^n_m\subseteq X^n_{m+1}\subseteq X^n$.

\begin{definition}
Let $m\geqslant 1$ and $n\geqslant 0$ be integers. We say that a function $H\colon X^n\to X^n$ has an \emph{$m$-generated range} if $\ran(H)\subseteq X^n_m$. We say that a function $H\colon X^*\to X^*$ has an \emph{$m$-generated range} if $H_n$ has an $m$-generated range for every $n\geqslant 0$.
\end{definition}

\begin{fact}
If a function $H\colon X^n\to X^n$ has an $m$-generated range, then it has an $(m+1)$-generated range. If a function $H\colon X^*\to X^*$ has an $m$-generated range, then it is length-preserving.
\end{fact}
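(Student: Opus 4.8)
The plan is to derive both assertions directly from the inclusion chain $X^n_m\subseteq X^n_{m+1}\subseteq X^n$ that was recorded just before the definition. For the first assertion, suppose $H\colon X^n\to X^n$ has an $m$-generated range, i.e.\ $\ran(H)\subseteq X^n_m$; composing this with $X^n_m\subseteq X^n_{m+1}$ immediately gives $\ran(H)\subseteq X^n_{m+1}$, which is exactly the statement that $H$ has an $(m+1)$-generated range. If one prefers to re-derive the containment $X^n_m\subseteq X^n_{m+1}$ in place rather than invoke it, the only case needing an argument is $m<n$ (when $m\geqslant n$ both sets are $X^n$): an arbitrary element of $X^n_m$ has the form $\bfy z^{n-m+1}$ with $\bfy z\in X^m$, and rewriting it as $(\bfy z)\,z^{n-(m+1)+1}$ together with the observation that $(\bfy z)z\in X^{m+1}$ exhibits it as an element of $X^n_{m+1}$.

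For the second assertion I would argue as follows. Assume $H\colon X^*\to X^*$ has an $m$-generated range; by definition this means $\ran(H_n)\subseteq X^n_m$ for every $n\geqslant 0$. Since $X^n_m\subseteq X^n$, this yields $\ran(H_n)\subseteq X^n$ for every $n\geqslant 0$, which is precisely one of the two equivalent formulations of length-preservation given in the definition of length-preserving functions; equivalently, $|H(\bfx)|=|\bfx|$ for every $\bfx\in X^*$. The degenerate case $n=0$ is covered since $X^0_m=X^0=\{\varepsilon\}$, forcing $H_0(\varepsilon)=\varepsilon$.

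I do not expect any genuine obstacle here: both claims are purely formal consequences of the set inclusions among the sets $X^n_m$. The only points that warrant a moment's care are the case split $m\geqslant n$ versus $m<n$ in the definition of $X^n_m$ (so that the computation rewriting $\bfy z^{n-m+1}$ is applied only when it is meaningful) and the trivial case $n=0$.
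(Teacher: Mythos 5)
Your proof is correct and follows exactly the route the paper intends: the Fact is stated without proof precisely because it is an immediate consequence of the inclusions $X^n_m\subseteq X^n_{m+1}\subseteq X^n$ recorded just before the definition, which is what you invoke (and correctly re-derive). The handling of the case split $m\geqslant n$ versus $m<n$ and of the degenerate case $n=0$ is accurate.
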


Let $m\geqslant 1$ and $n\geqslant 0$ be integers. The \emph{$m$-diagonal section} of a function $F\colon X^n\to Y$ is the map $\delta_{F}^m\colon X^{\min\{n,m\}}\to Y$ defined by $\delta_F^m=F$, if $n=0$, and $\delta_F^m(\bfy z)=F(\bfy z^{n-\min\{n,m\}+1})$ for every $\bfy z\in X^{\min\{n,m\}}$, otherwise. We clearly have $\ran(\delta_F^m)\subseteq \ran(\delta_F^{m+1})$.

\begin{definition}
Let $m\geqslant 1$ and $n\geqslant 0$ be integers. We say that a function $F\colon X^n\to Y$ is \emph{$m$-quasi-range-idempotent} if $\ran(F)=\ran(\delta_F^m)$.
\end{definition}

By definition, any $m$-quasi-range-idempotent function $F\colon X^n\to Y$ is $(m+1)$-quasi-range-idempotent. We also observe that the property of being $m$-quasi-range-idempotent is preserved under left composition with unary maps: if $F\colon X^n\to Y$ is $m$-quasi-range-idempotent, then so is $g\circ F$ for any map $g\colon Y\to Y'$, where $Y'$ is a nonempty set.

\begin{proposition}
If $F\colon X^*\to X^*$ is associative and $F_k$ has an $m$-generated range for some $k,m\geqslant 1$, then for any integer $p\geqslant 0$ the function $F_{k+p}$ is $(m+p)$-quasi-range-idempotent. In particular, $F_k$ is $m$-quasi-range-idempotent.
\end{proposition}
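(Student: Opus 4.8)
The plan is to prove, for each fixed integer $p\geqslant 0$, the equality $\ran(F_{k+p})=\ran(\delta^{m+p}_{F_{k+p}})$ asserted by $(m+p)$-quasi-range-idempotence; the final sentence is then just the case $p=0$. One of the two inclusions is free: by the very definition of the $(m+p)$-diagonal section as a restriction of $F_{k+p}$ to inputs of a special repeated form, the inclusion $\ran(\delta^{m+p}_{F_{k+p}})\subseteq\ran(F_{k+p})$ holds automatically. So all the content lies in proving the reverse inclusion $\ran(F_{k+p})\subseteq\ran(\delta^{m+p}_{F_{k+p}})$.

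First I would dispose of the degenerate regime $m\geqslant k$: then $X^k_m=X^k$, so the hypothesis on $F_k$ carries no information, and moreover $\min\{k+p,m+p\}=k+p$, which forces $\delta^{m+p}_{F_{k+p}}=F_{k+p}$ on all of $X^{k+p}$; the desired equality is then trivial. So from now on I assume $m<k$. In this regime $\min\{k+p,m+p\}=m+p$, and unwinding the definition one gets $\delta^{m+p}_{F_{k+p}}(\bfy z)=F(\bfy z^{k-m+1})$ for every string $\bfy z\in X^{m+p}$ (so $|\bfy|=m+p-1$, and the argument of $F$ has total length $(m+p-1)+(k-m+1)=k+p$, as it must).

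The heart of the argument is a single application of associativity. Fix $\bfw\in X^{k+p}$ and split it as $\bfw=\bfu\bfv$ with $|\bfu|=p$ and $|\bfv|=k$. Associativity of $F$, taken with an empty trailing string, gives $F(\bfw)=F(\bfu\bfv)=F(\bfu\,F(\bfv))$. Since $F(\bfv)=F_k(\bfv)\in\ran(F_k)\subseteq X^k_m$, I may write $F(\bfv)=\bfx\,b^{\,k-m+1}$ for some $\bfx b\in X^m$ (so $|\bfx|=m-1$ and $b\in X$). Substituting, $F(\bfw)=F(\bfu\bfx\,b^{\,k-m+1})$; and since $\bfu\bfx b$ is a string of length $p+(m-1)+1=m+p$, the formula for $\delta^{m+p}_{F_{k+p}}$ from the previous paragraph identifies the right-hand side as $\delta^{m+p}_{F_{k+p}}(\bfu\bfx b)$. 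Hence $F(\bfw)\in\ran(\delta^{m+p}_{F_{k+p}})$, and since $\bfw\in X^{k+p}$ was arbitrary this yields the missing inclusion, completing the proof; the case $p=0$ gives that $F_k$ is $m$-quasi-range-idempotent.

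I do not anticipate a genuine obstacle here: the argument is short, and the only point requiring care is the bookkeeping of string lengths together with reading off the definition of $\delta^{m+p}_{F_{k+p}}$ correctly in the two regimes $m<k$ and $m\geqslant k$. Conceptually, everything rests on the observation that once $F(\bfv)$ has been pushed into $X^k_m$ it already carries the ``$\bfy z^{k-m+1}$'' shape that exhibits $F(\bfw)=F(\bfu\,F(\bfv))$ as a value of the $(m+p)$-diagonal section of $F_{k+p}$.
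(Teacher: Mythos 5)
Your proof is correct and follows essentially the same route as the paper: one application of associativity with an empty trailing string, $F(\bfu\bfv)=F(\bfu\,F(\bfv))$, followed by reading off the $\bfy z^{k-\min\{k,m\}+1}$ shape of $F(\bfv)\in X^k_m$ to recognize the result as a value of $\delta^{m+p}_{F_{k+p}}$. The only cosmetic difference is that you split the cases $m<k$ and $m\geqslant k$, whereas the paper treats them uniformly via the identity $k-\min\{k,m\}+1=(k+p)-\min\{k+p,m+p\}+1$.
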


\begin{proof}
Let $\bfx\in X^p$ and $\bfx'\in X^k$. Then, there exists $\bfy z\in X^{\min\{k,m\}}$ such that
\begin{eqnarray*}
F_{k+p}(\bfx\bfx') &=& F_{k+p}(\bfx F_k(\bfx')) ~=~ F_{k+p}(\bfx\bfy z^{k-\min\{k,m\}+1})\\
 &=& F_{k+p}(\bfx\bfy z^{(k+p)-\min\{k+p,m+p\}+1}) ~=~ \delta_{F_{k+p}}^{m+p}(\bfx\bfy z),
\end{eqnarray*}
which shows that $\ran(F_{k+p})\subseteq\ran(\delta_{F_{k+p}}^{m+p})$. The converse inclusion is obvious.
\end{proof}

\begin{lemma}\label{lemma:as6f5sa}
Let $m,n\geqslant 1$ be integers. Any map $F\colon X^n\to Y$ satisfying $F=F\circ H$, where $H\colon X^n\to X^n$ has an $m$-generated range, is $m$-quasi-range-idempotent.
\end{lemma}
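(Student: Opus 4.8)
The plan is to prove the two inclusions $\ran(\delta_F^m)\subseteq\ran(F)$ and $\ran(F)\subseteq\ran(\delta_F^m)$ separately, the first being essentially definitional and the second being where the hypothesis $F=F\circ H$ with $H$ of $m$-generated range does the work.

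First I would dispose of the inclusion $\ran(\delta_F^m)\subseteq\ran(F)$. Indeed, when $n=0$ we have $\delta_F^m=F$ by definition, and when $n\geqslant 1$ the map $\delta_F^m$ sends $\bfy z\in X^{\min\{n,m\}}$ to $F(\bfy z^{n-\min\{n,m\}+1})$, which is a value taken by $F$ on $X^n_m\subseteq X^n$; hence every element of $\ran(\delta_F^m)$ already lies in $\ran(F)$. This inclusion uses nothing about $H$.

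For the reverse inclusion, I would take an arbitrary element $w\in\ran(F)$, say $w=F(\bfx)$ with $\bfx\in X^n$. Using $F=F\circ H$ we rewrite $w=F(H(\bfx))$. Since $H$ has an $m$-generated range, $H(\bfx)\in X^n_m=\{\bfy z^{n-\min\{n,m\}+1}:\bfy z\in X^{\min\{n,m\}}\}$, so there is some $\bfy z\in X^{\min\{n,m\}}$ with $H(\bfx)=\bfy z^{n-\min\{n,m\}+1}$. Then $w=F(\bfy z^{n-\min\{n,m\}+1})=\delta_F^m(\bfy z)$, which shows $w\in\ran(\delta_F^m)$. Combining the two inclusions yields $\ran(F)=\ran(\delta_F^m)$, i.e.\ $F$ is $m$-quasi-range-idempotent.

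I do not expect any real obstacle here: the statement is a direct unpacking of the definitions of $m$-generated range and of the $m$-diagonal section, and the only thing to be a little careful about is handling the degenerate case $n=0$ (where $X^0_m=X^0$ and $\delta_F^m=F$ trivially) versus $n\geqslant 1$, and keeping the exponent $n-\min\{n,m\}+1$ consistent between the definition of $X^n_m$ and that of $\delta_F^m$.
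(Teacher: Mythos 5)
Your proof is correct and follows exactly the paper's argument: the inclusion $\ran(F)=\ran(F\circ H)\subseteq\ran(\delta_F^m)$ from the $m$-generated range of $H$, combined with the trivial reverse inclusion $\ran(\delta_F^m)\subseteq\ran(F)$. Your only superfluous care is the case $n=0$, which is excluded by the hypothesis $n\geqslant 1$ of the lemma.
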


\begin{proof}
Since $\ran(H)\subseteq X^n_m$, we have $\ran(F)=\ran(F\circ H)\subseteq\ran(\delta_F^m)$. Since the converse inclusion $\ran(F)\supseteq\ran(\delta_F^m)$ holds for any map $F\colon X^n\to Y$, we have that $F$ is $m$-quasi-range-idempotent.
\end{proof}

\begin{lemma}
Under the assumptions of Lemma~\ref{lemma:f67s5fds}, if $F$ is $m$-quasi-range-idempotent for some $m\geqslant 1$, then $g$ can always be chosen so that $\ran(g)\subseteq X^n_m$ and therefore $H$ has an $m$-generated range. Conversely, if $H$ has an $m$-generated range for some $m\geqslant 1$, then $F$ is $m$-quasi-range-idempotent.
\end{lemma}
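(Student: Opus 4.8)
The plan is to prove the two implications separately. The converse is essentially a restatement of Lemma~\ref{lemma:as6f5sa}, so it should take only a line: assuming $H$ has an $m$-generated range, i.e., $\ran(H)\subseteq X^n_m$, we recall from Lemma~\ref{lemma:f67s5fds} that $F=F\circ H$, and then Lemma~\ref{lemma:as6f5sa} applies verbatim to give that $F$ is $m$-quasi-range-idempotent.

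For the direct implication, the idea is to exploit the fact that a quasi-inverse of $F$ is pinned down only on $\ran(F)$. Indeed, by the definition of $Q(F)$, to specify a member $g\in Q(F)$ it suffices to choose, for each $y\in\ran(F)$, a preimage $g(y)\in F^{-1}(y)$, and then to define $g$ on $Y\setminus\ran(F)$ in any way that keeps $\ran(g|_{\ran(F)})=\ran(g)$ (for instance, by taking $g$ constant on $Y\setminus\ran(F)$, equal to $g(y_0)$ for some fixed $y_0\in\ran(F)$; this makes sense since $\ran(F)$ is nonempty). The hypothesis that $F$ is $m$-quasi-range-idempotent, i.e., $\ran(F)=\ran(\delta_F^m)$, says precisely that every $y\in\ran(F)$ can be written as $y=\delta_F^m(\bfy z)=F(\bfy z^{n-\min\{n,m\}+1})$ for some $\bfy z\in X^{\min\{n,m\}}$; and the string $\bfy z^{n-\min\{n,m\}+1}$ is, by the very definition of $X^n_m$, an element of $X^n_m$. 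So, using AC, I would pick for each $y\in\ran(F)$ one such string and set $g(y)$ to be it. The resulting $g$ then satisfies $\ran(g)\subseteq X^n_m$, hence $\ran(H)=\ran(g\circ F)=\{g(y):y\in\ran(F)\}\subseteq X^n_m$, which is the desired $m$-generated range; and the last assertion of the statement ($F$ is then $m$-quasi-range-idempotent) follows at once from the converse (or again from Lemma~\ref{lemma:as6f5sa}).

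The only point that needs genuine verification is that the $g$ constructed above indeed lies in $Q(F)$. This splits into checking $F\circ g|_{\ran(F)}=\id|_{\ran(F)}$, which is immediate from the choice $g(y)\in F^{-1}(y)$, and checking $\ran(g|_{\ran(F)})=\ran(g)$, which is immediate from having chosen the off-range values of $g$ inside $\ran(g|_{\ran(F)})$. I do not expect any real obstacle here: beyond this routine check, the argument is just bookkeeping about string lengths and about which strings belong to $X^n_m$, together with the observation that AC is exactly what is needed to make the simultaneous choice of preimages in $X^n_m$.
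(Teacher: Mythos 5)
Your proof is correct and follows essentially the same route as the paper: for the forward direction the paper likewise picks, for each $y\in\ran(F)=\ran(\delta_F^m)$, a preimage $g(y)\in(\delta_F^m)^{-1}\{y\}$ viewed as a string in $X^n_m$, and for the converse it invokes Lemma~\ref{lemma:as6f5sa} exactly as you do. Your extra care about extending $g$ off $\ran(F)$ and verifying the two quasi-inverse conditions is a harmless elaboration of what the paper leaves implicit.
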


\begin{proof}
If $F$ is $m$-quasi-range-idempotent for some $m\geqslant 1$, then there always exists $g\in Q(F)$ such that $\ran(g)\subseteq X^n_m$; indeed, if $y\in\ran(F)=\ran(\delta_F^m)$, then we can take $g(y)\in (\delta_F^m)^{-1}\{y\}\subseteq X^n_m$. Therefore $H=g\circ F$ has an $m$-generated range. Conversely, if $H$ has an $m$-generated range for some $m\geqslant 1$, then $F$ is $m$-quasi-range-idempotent by Lemma~\ref{lemma:as6f5sa}.
\end{proof}

\begin{corollary}\label{cor:fds68}
For any $m\geqslant 1$, the equivalence in Lemma~\ref{lemma:asdf7fsad} holds if we add the condition that every $F_n$ $(n\geqslant 1)$ is $m$-quasi-range-idempotent in assertion (i) and the conditions that $\ran(g_n)\subseteq X^n_m$ $(n\geqslant 1)$ and $H$ has an $m$-generated range in assertions (ii) and (iii).
\end{corollary}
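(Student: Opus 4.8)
The plan is to run the proof of Lemma~\ref{lemma:asdf7fsad} again, now tracking the $m$-generated range condition at each step and feeding in the two lemmas that immediately precede this corollary (the one asserting that $m$-quasi-range-idempotence of $F_n$ lets us choose $g_n\in Q(F_n)$ with $\ran(g_n)\subseteq X^n_m$, together with its converse half, and Lemma~\ref{lemma:as6f5sa}). Concretely, I would establish the cycle (i) $\Rightarrow$ (ii) $\Rightarrow$ (iii) $\Rightarrow$ (i) for the strengthened assertions, where assertion~(i) now also requires every $F_n$ $(n\geqslant 1)$ to be $m$-quasi-range-idempotent, and assertions~(ii) and~(iii) are restricted to sequences with $\ran(g_n)\subseteq X^n_m$ and additionally demand that $H$ has an $m$-generated range.

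For (i) $\Rightarrow$ (ii): assuming $F$ is B-preassociative and every $F_n$ is $m$-quasi-range-idempotent, the preceding lemma provides, for each $n\geqslant 1$, some $g_n\in Q(F_n)$ with $\ran(g_n)\subseteq X^n_m$, so there is at least one admissible sequence; and for any admissible sequence, Lemma~\ref{lemma:asdf7fsad} already delivers that $H$ is associative and length-preserving, while the elementary identity $\ran(H_n)=\ran(g_n\circ F_n)=\ran(g_n)\subseteq X^n_m$ shows that $H$ has an $m$-generated range. The implication (ii) $\Rightarrow$ (iii) is then immediate, just as in Lemma~\ref{lemma:asdf7fsad}.

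For (iii) $\Rightarrow$ (i): given an admissible sequence witnessing~(iii), Lemma~\ref{lemma:asdf7fsad} already yields that $F$ is B-preassociative, so only the $m$-quasi-range-idempotence of each $F_n$ remains. For this I would use Lemma~\ref{lemma:f67s5fds} to obtain $F_n=F_n\circ H_n$, observe that $H_n$ has an $m$-generated range by hypothesis, and invoke Lemma~\ref{lemma:as6f5sa} (equivalently, the converse half of the lemma preceding this corollary) to conclude that $F_n$ is $m$-quasi-range-idempotent. I expect no genuine obstacle here: the real content sits in the earlier lemmas, and the only thing needing mild care is to cite the correct half of the $m$-quasi-range-idempotence $\leftrightarrow$ $m$-generated range correspondence in each direction, together with the small check that $\ran(g_n\circ F_n)=\ran(g_n)$ for $g_n\in Q(F_n)$, so that the constraint $\ran(g_n)\subseteq X^n_m$ really does force $H$ to have an $m$-generated range.
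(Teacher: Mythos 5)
The paper gives no explicit proof of this corollary---it is left as an immediate consequence of Lemma~\ref{lemma:asdf7fsad}, Lemma~\ref{lemma:as6f5sa} and the unnumbered lemma that precedes it---and your argument is exactly the intended assembly of those pieces: the forward direction uses the existence of $g_n\in Q(F_n)$ with $\ran(g_n)\subseteq X^n_m$ guaranteed by $m$-quasi-range-idempotence, together with the identity $\ran(g_n\circ F_n)=\ran(g_n)$ (which is the second defining property of a quasi-inverse, as you correctly flag), and the backward direction recovers $m$-quasi-range-idempotence of $F_n$ from $F_n=F_n\circ H_n$ (Lemma~\ref{lemma:f67s5fds}) and Lemma~\ref{lemma:as6f5sa}. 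All of this is correct.

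One step deserves more care than ``immediate'': (ii) $\Rightarrow$ (iii). In Lemma~\ref{lemma:asdf7fsad} that implication is trivial because AC guarantees that $Q(F_n)$ is nonempty; here, under the literal reading of the modified assertion (ii) as a universal statement restricted to sequences with $\ran(g_n)\subseteq X^n_m$, the assertion could hold vacuously, since such a sequence exists precisely when every $F_n$ is $m$-quasi-range-idempotent (one direction is the lemma preceding the corollary, the other is Lemma~\ref{lemma:as6f5sa} applied to $F_n=F_n\circ g_n\circ F_n$). You do establish that an admissible sequence exists, but only inside the proof of (i) $\Rightarrow$ (ii), where hypothesis (i) is available; in a cycle (i) $\Rightarrow$ (ii) $\Rightarrow$ (iii) $\Rightarrow$ (i) the existence claim is not at hand when you pass from (ii) to (iii). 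The remedy is either to read assertion (ii) as implicitly non-vacuous or to carry the nonemptiness observation along as part of what is proved from (i). This is really an ambiguity inherited from the statement of the corollary rather than a defect in your mathematics, but it is the one point in your write-up that should be made explicit.
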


\begin{theorem}\label{thm:sfa76r5}
For any $m\geqslant 1$, the equivalence between (i) and (ii) in Theorem~\ref{thm:fa7sfds} still holds if we add the condition that every $F_n$ $(n\geqslant 1)$ is $m$-quasi-range-idempotent in assertion (i) and the condition that $H$ has an $m$-generated range in assertion (ii). In this case the condition $\ran(g_n)\subseteq X^n_m$ $(n\geqslant 1)$ must be added in the last part of the statement.
\end{theorem}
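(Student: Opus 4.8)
The plan is to rerun the proof of Theorem~\ref{thm:fa7sfds} almost verbatim, feeding in the $m$‑refined auxiliary results in place of the unrefined ones: Corollary~\ref{cor:fds68} (the $m$‑refinement of Lemma~\ref{lemma:asdf7fsad}) plays the role of Lemma~\ref{lemma:asdf7fsad}, and Lemma~\ref{lemma:as6f5sa} together with the lemma stated just after it is used to keep track of ranges. So the proof is essentially a matter of assembling previously established pieces.

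For the implication (i)\,$\Rightarrow$\,(ii) with the extra hypotheses, I would assume that $F$ is B‑preassociative and that every $F_n$ is $m$‑quasi‑range‑idempotent. By the ``(i)\,$\Rightarrow$\,(iii)'' part of the refined equivalence in Corollary~\ref{cor:fds68}, there is then a sequence $(g_n\in Q(F_n))_{n\geqslant 1}$ with $\ran(g_n)\subseteq X^n_m$ such that the function $H$ defined by $H_0(\varepsilon)=\varepsilon$ and $H_n=g_n\circ F_n$ is associative, length‑preserving, and has an $m$‑generated range. Lemma~\ref{lemma:f67s5fds} then yields $F_n=f_n\circ H_n$ with $f_n=F_n|_{\ran(H_n)}$ one‑to‑one, which is exactly the refined assertion (ii).

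For (ii)\,$\Rightarrow$\,(i), Theorem~\ref{thm:fa7sfds} already gives that $F$ is B‑preassociative, so only the $m$‑quasi‑range‑idempotence of each $F_n$ remains. Here I would use that $H$, being associative as a string function, satisfies $H_n\circ H_n=H_n$, whence $F_n\circ H_n=f_n\circ H_n\circ H_n=f_n\circ H_n=F_n$; since $H_n$ has an $m$‑generated range, Lemma~\ref{lemma:as6f5sa} then gives that $F_n$ is $m$‑quasi‑range‑idempotent. For the last part of the statement, I would note that, (ii) now being in force in its refined form, each $F_n$ is $m$‑quasi‑range‑idempotent by the previous step, so the lemma following Lemma~\ref{lemma:as6f5sa} produces $g_n\in Q(F_n)$ with $\ran(g_n)\subseteq X^n_m$; for any such choice, $H_n=g_n\circ F_n$ has $\ran(H_n)=\ran(g_n|_{\ran(F_n)})=\ran(g_n)\subseteq X^n_m$, so $H$ has an $m$‑generated range, while the identities $f_n=F|_{\ran(H_n)}=F_n|_{\ran(H_n)}$ and $f_n^{-1}\in Q(F_n)$ carry over unchanged from Theorem~\ref{thm:fa7sfds} (and one moreover gets $\ran(f_n^{-1})=\ran(H_n)\subseteq X^n_m$). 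This records the extra condition $\ran(g_n)\subseteq X^n_m$ in the last part.

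I do not expect a genuine obstacle here, since everything reduces to bookkeeping of ranges. The one step that deserves a little care is the verification, in (ii)\,$\Rightarrow$\,(i), that the $H_n$ arising from an arbitrary refined factorization really does satisfy $F_n\circ H_n=F_n$ --- this rests on the idempotence $H_n\circ H_n=H_n$, and hence on $H$ being associative \emph{as a string function} rather than merely on each $H_n$ being an idempotent map.
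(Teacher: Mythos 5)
Your proposal is correct and matches the paper's intent exactly: the paper's own proof is just ``Follows from the results above,'' and the results it has in mind are precisely the ones you assemble (Corollary~\ref{cor:fds68}, Lemma~\ref{lemma:f67s5fds}, Lemma~\ref{lemma:as6f5sa} and the lemma following it). Your extra care in deriving $H_n\circ H_n=H_n$ from associativity of the string function $H$ (via $H(H(\bfy))=H(\bfy)$ and length preservation) is exactly the right way to close the one step the paper leaves implicit.
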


\begin{proof}
Follows from the results above.
\end{proof}

Setting $m=1$ in Theorem~\ref{thm:sfa76r5}, we immediately derive a factorization of any B-preasso{\-}ciative function whose $n$-ary part $F_n$ is $1$-quasi-range-idempotent for every $n\geqslant 1$. An alternative factorization for such functions is given in the following theorem, established in \cite{MarTeh3}. Recall that a function $F\colon X^*\to X\cup\{\varepsilon\}$ is \emph{barycentrically associative} (or \emph{B-associative} for short) \cite{MarTeh3} if it satisfies the equation $F(\bfx\bfy\bfz)=F(\bfx F(\bfy)^{|\bfy|}\bfz)$ for any $\bfx,\bfy,\bfz\in X^*$. (B-associativity is also known as \emph{decomposability}, see \cite{FodRou94,GraMarMesPap09}).

\begin{theorem}[{\cite{MarTeh3}}]\label{thm:FactoriAWRI-BPA237111}
Assume AC and let $F\colon X^*\to Y$ be a function. The following assertions are equivalent.
\begin{enumerate}
\item[(i)] $F$ is B-preassociative and $F_n$ is $1$-quasi-range-idempotent for every $n\geqslant 1$.

\item[(ii)] There exists a B-associative function $H\colon X^*\to X\cup\{\varepsilon\}$ such that $H(\varepsilon)=\varepsilon$ and a sequence $(f_n)_{n\geqslant 1}$ of one-to-one functions $f_n\colon\ran(H_n)\to Y$ such that $F_n=f_n\circ H_n$ for every $n\geqslant 1$.
\end{enumerate}
If condition (ii) holds, then for every $n\geqslant 1$ we have $F_n=\delta_{F_n}\circ H_n$, $f_n=\delta_{F_n}|_{\ran(H_n)}$, $f_n^{-1}\in Q(\delta_{F_n})$, and we may choose $H_n=g_n\circ F_n$ for any $g_n\in Q(\delta_{F_n})$.
\end{theorem}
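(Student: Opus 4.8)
The plan is to reduce this result to the $m=1$ case of Theorem~\ref{thm:sfa76r5} by observing that, when all ranges are $1$-generated, a length-preserving associative string function on $X^*$ is essentially the same object as a B-associative function into $X\cup\{\varepsilon\}$. More precisely, if $H\colon X^*\to X^*$ is length-preserving and has a $1$-generated range, then for every $n\geqslant 1$ we have $\ran(H_n)\subseteq X^n_1=\{z^n:z\in X\}$, so $H_n(\bfx)$ is always a constant string $z^n$; identifying this string with its unique letter $z$ (and $H_0(\varepsilon)$ with $\varepsilon$) turns $H$ into a map $\widehat H\colon X^*\to X\cup\{\varepsilon\}$ with $\widehat H(\varepsilon)=\varepsilon$, and conversely any such $\widehat H$ lifts back to a length-preserving $1$-generated $H$ via $H_n(\bfx)=\widehat H_n(\bfx)^n$. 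The first key step is to check that under this correspondence, associativity of $H$ (the equation $H(\bfx\bfy\bfz)=H(\bfx H(\bfy)\bfz)$) is equivalent to B-associativity of $\widehat H$ (the equation $\widehat H(\bfx\bfy\bfz)=\widehat H(\bfx\,\widehat H(\bfy)^{|\bfy|}\bfz)$): indeed $H(\bfy)=\widehat H(\bfy)^{|\bfy|}$ under the identification, so the two substitutions coincide letter for letter, and the outputs agree because both are constant strings of the same length. I also need to note that the diagonal sections match up, namely $\delta_{H_n}$ as a map $X\to X$ corresponds to $\widehat H_n$ restricted along the diagonal, and more importantly that $\ran(H_n)$ (a subset of $X^n$) and $\ran(\widehat H_n)$ (a subset of $X\cup\{\varepsilon\}$) are in canonical bijection, so that the one-to-one maps $f_n$ on $\ran(H_n)$ in Theorem~\ref{thm:sfa76r5} transfer verbatim to one-to-one maps on $\ran(\widehat H_n)$ in the present statement.

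With this dictionary in hand, the equivalence (i)$\Leftrightarrow$(ii) is immediate: assertion (i) here is exactly assertion (i) of Theorem~\ref{thm:sfa76r5} with $m=1$; assertion (ii) of Theorem~\ref{thm:sfa76r5} with $m=1$ produces an associative length-preserving $H$ with a $1$-generated range together with one-to-one $f_n\colon\ran(H_n)\to Y$ satisfying $F_n=f_n\circ H_n$, and feeding this $H$ through the correspondence yields the B-associative $\widehat H$ and the same $f_n$ (now viewed on $\ran(\widehat H_n)$) required by assertion (ii) here; the reverse direction lifts a B-associative $\widehat H$ back to an associative length-preserving $1$-generated $H$. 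For the final ``moreover'' part, I would invoke the last sentence of Theorem~\ref{thm:sfa76r5}: one may take $H_n=g_n\circ F_n$ with $g_n\in Q(F_n)$ and $\ran(g_n)\subseteq X^n_1$. The condition $\ran(g_n)\subseteq X^n_1=\{z^n:z\in X\}$ says precisely that $g_n$ factors as $z\mapsto z^n$ after a map into $X$, and that underlying map into $X\cup\{\varepsilon\}$ is exactly a quasi-inverse of $\delta_{F_n}$ (using $\ran(F_n)=\ran(\delta_{F_n})$ from $1$-quasi-range-idempotence, so that $\delta_{F_n}$ and $F_n$ have the same range and the quasi-inverse condition transfers); hence $H_n=g_n\circ F_n$ becomes $H_n=\delta_{F_n}\circ H_n$-compatible and $f_n=F_n|_{\ran(H_n)}=\delta_{F_n}|_{\ran(H_n)}$, with $f_n^{-1}\in Q(\delta_{F_n})$, which is what is claimed. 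The identity $F_n=\delta_{F_n}\circ H_n$ can be read off from $F_n=f_n\circ H_n$ and $f_n=\delta_{F_n}|_{\ran(H_n)}$ together with $H_n\circ H_n=H_n$.

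The main obstacle I anticipate is not conceptual but bookkeeping: making the identification $X^n_1\cong X$ and $\ran(H_n)\cong\ran(\widehat H_n)$ precise enough that the transfer of the quasi-inverse conditions and of the equality $\ran(F_n)=\ran(\delta_F^1)=\ran(\delta_{F_n})$ is airtight, and checking the $n=0$ and $|\bfy|=0$ edge cases (where $\varepsilon$ must be handled separately on both sides). One subtlety to be careful about: in Theorem~\ref{thm:sfa76r5} the $f_n$ are defined only for $n\geqslant 1$ while $H$ is defined on all of $X^*$ with $H_0(\varepsilon)=\varepsilon$; the same convention must be carried over here, and the B-associativity equation for $\widehat H$ must be verified to hold trivially when a relevant argument is $\varepsilon$. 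Once these identifications are spelled out, the proof is just a translation of the already-established Theorem~\ref{thm:sfa76r5}, so the bulk of the work is setting up notation cleanly rather than proving anything new.
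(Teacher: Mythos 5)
Your proposal is correct and follows essentially the same route as the paper: the paper also derives this theorem from Theorem~\ref{thm:sfa76r5} with $m=1$, identifying a length-preserving associative $H$ with $1$-generated range with the B-associative function $H_{[1]}$ via $H(\bfx)=H_{[1]}(\bfx)^{|\bfx|}$. Your treatment of the ``moreover'' part (transferring $g_n\in Q(F_n)$ with $\ran(g_n)\subseteq X^n_1$ to a quasi-inverse of $\delta_{F_n}$ using $\ran(F_n)=\ran(\delta_{F_n})$) is in fact more explicit than the paper's, which leaves those details to the reader.
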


We now show how Theorem~\ref{thm:FactoriAWRI-BPA237111} can be easily derived from Theorem~\ref{thm:sfa76r5}.

For every $m\geqslant 1$ and every $\bfx\in X^*$, denote by $\bfx_{[m]}$ the \emph{$m$-prefix} of $\bfx$, that is the string in $\bigcup_{i=0}^m X^i$ defined as follows: if $|\bfx|\leqslant m$, then $\bfx_{[m]}=\bfx$; otherwise, if $\bfx=\bfx'\bfx''$, with $|\bfx'|=m$, then $\bfx_{[m]}=\bfx'$.

If $H\colon X^*\to X^*$ has an $m$-generated range, then by definition it can be assimilated with the function $H_{[m]}\colon X^*\to \bigcup_{i=0}^m X^i$ defined by $H_{[m]}(\bfx)=H(\bfx)_{[m]}$. Indeed, $H$ can be reconstructed from $H_{[m]}$ by setting
$$
H(\bfx) ~=~
\begin{cases}
H_{[m]}(\bfx), & \mbox{if $|\bfx|\leqslant m$},\\
H_{[m]}(\bfx)z^{n-m}, & \mbox{otherwise},
\end{cases}
$$
where $z$ is the last letter of $H_{[m]}(\bfx)$.

Thus we can prove Theorem~\ref{thm:FactoriAWRI-BPA237111} from Theorem~\ref{thm:sfa76r5} as follows.

\begin{proof}[Proof of Theorem~\ref{thm:FactoriAWRI-BPA237111} as a corollary of Theorem~\ref{thm:sfa76r5}]
By setting $m=1$ in Theorem~\ref{thm:sfa76r5}, we see that $H$ has a $1$-generated range. By the observation above, $H$ can then be assimilated with $H_{[1]}$ through the identity $H(\bfx)=H_{[1]}(\bfx)^{|\bfx|}$ for every $\bfx\in X^*$. It is then clear that $H$ is associative if and only if $H_{[1]}$ is B-associative. The other parts of Theorem~\ref{thm:FactoriAWRI-BPA237111} follow immediately.
\end{proof}

\begin{remark}
The question of generalizing Theorem~\ref{thm:FactoriAWRI-BPA237111} by dropping the $1$-quasi-range-idem{\-}potent condition on every $F_n$ was raised in \cite{MarTeh3}. Clearly, Theorem~\ref{thm:fa7sfds} answers this question.
\end{remark}

\section{Some consequences of the factorization result}

Since any associative function $F\colon X^*\to X^*$ is preassociative and, in turn, B-preassocia{\-}tive, it can be factorized as indicated in Theorem~\ref{thm:fa7sfds}. Therefore, up to one-to-one unary maps, the associative string functions can be completely described in terms of length-preserving associative string functions, and similarly for the preassociative and B-preasso{\-}ciative functions. This is an important observation which shows that in a sense any of these nested classes can be described in terms of the smallest one, namely the subclass of associative and length-preserving string functions (see Figure~\ref{fig:BA}).

\begin{figure}[htbp]
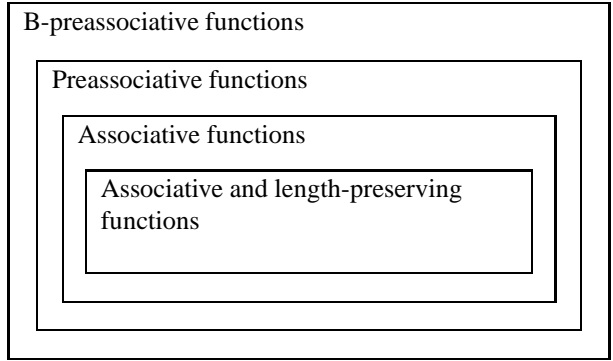
\centering
$$
\fbox{
\begin{minipage}[t]{0.6\textwidth}
B-preassociative functions
\vspace{1ex}
$$
\fbox{
\begin{minipage}[t]{0.9\textwidth}
Preassociative functions
\vspace{1ex}
$$
\fbox{
\begin{minipage}[t]{0.9\textwidth}
Associative functions
\vspace{1ex}
$$
\fbox{
\begin{minipage}[t]{0.9\textwidth}
Associative and length-preserving\\ functions
\vspace{3ex}
\end{minipage}
}
$$
\vspace{0.5ex}
\end{minipage}
}
$$
\vspace{0.5ex}
\end{minipage}
}
$$
\vspace{0.5ex}
\end{minipage}
}
$$
\caption{Nested subclasses of B-preassociative functions} \label{fig:BA}
\end{figure}

\begin{example}
Let $a\in X$ be fixed. Let the map $F\colon X^* \to X^*$ be defined inductively by $F(z) = z$ if $z \neq a$, $F(a) = \varepsilon$, and $F(\bfx z) = F(\bfx)F(z)$ for every $\bfx z\in X^*$. Thus defined, $F(\bfx)$ is obtained from $\bfx$ by removing all the `a' letters (if any). Since $F$ is associative (see \cite{LehMarTeh} for more details), it is B-preassociative and therefore it can be factorized as indicated in Theorem~\ref{thm:fa7sfds}. For every $n\geqslant 1$, define the function $g_n\colon\bigcup_{i=0}^n (X\setminus\{a\})^i\to X^n$ by $g_n(\bfx)=\bfx a^{n-|\bfx|}$. Since $F_n\circ g_n\circ F_n=F_n$ for every $n\geqslant 1$, we see that $g_n\in Q(F_n)$. By Theorem~\ref{thm:fa7sfds}, the function $H\colon X^*\to X^*$, defined by $H_0(\varepsilon)=\varepsilon$ and $H_n=g_n\circ F_n$ for every $n\geqslant 1$, is associative and length-preserving. Moreover, we have $F_n=f_n\circ H_n$ for every $n\geqslant 1$, where $f_n=F_n|_{\ran(H_n)}$. Thus defined, $H_n(\bfx)$ is obtained from $\bfx$ by moving all the `a' letters (if any) to the rightmost positions. For instance, $H_{11}(mathematics)=mthemticsaa$.
\end{example}

As observed in the previous section, setting $m=1$ in Theorem~\ref{thm:sfa76r5}, we can derive a factorization of any B-preassociative function whose $n$-ary part $F_n$ is $1$-quasi-range-idempotent for every $n\geqslant 1$ (Theorem~\ref{thm:FactoriAWRI-BPA237111}). In the following example, we derive a similar factorization explicitly directly from Theorem~\ref{thm:fa7sfds} (without using Theorem~\ref{thm:sfa76r5}).

\begin{example}\label{ex:5dfdas}
If we assume that $F_n$ is $1$-quasi-range-idempotent for every $n\geqslant 1$ in assertion (i) of Theorem~\ref{thm:fa7sfds}, then the factorization given in assertion (ii) can be obtained by defining $H_n=g_n\circ F_n$, where $g_n(x)=h_n(x)^n$ and $h_n\in Q(\delta_{F_n})$. Indeed, since $F_n$ is $1$-quasi-range-idempotent, we have
$$
(F_n\circ g_n\circ F_n)(\bfx) ~=~ (\delta_{F_n}\circ h_n\circ F_n)(\bfx) ~=~ F_n(\bfx),
$$
which shows that $g_n\in Q(F_n)$.
\end{example}

%

It is clear that the B-associativity property, originally defined for functions $F\colon X^*\to X\cup\{\varepsilon\}$ can be immediately extended to string functions $F\colon X^*\to X^*$.

\begin{definition}
We say that a string function $F\colon X^*\to X^*$ is \emph{barycentrically associative} (or \emph{B-associative} for short) if it satisfies the equation $F(\bfx\bfy\bfz)=F(\bfx F(\bfy)^{|\bfy|}\bfz)$ for any $\bfx,\bfy,\bfz\in X^*$.
\end{definition}

It is easy to see that any B-associative string function $F\colon X^*\to X^*$ is B-preassociative and hence can be factorized as indicated in Theorem~\ref{thm:fa7sfds}. Moreover, any B-associative string function satisfying $\ran(F_n)\subseteq X$ for every $n\geqslant 1$ is also such that $F_n$ is $1$-quasi-range-idempotent for every $n\geqslant 1$ (see \cite{MarTeh3}) and therefore it can be factorized as described in Example~\ref{ex:5dfdas}. In this case we have $\delta_{F_n}\circ F_n=F_n$, which shows that $\id|_{\ran(F_n)}\in Q(\delta_{F_n})$ for every $n\geqslant 1$. Therefore, from Example~\ref{ex:5dfdas} we immediately derive the following corollary.

\begin{corollary}
Let $F\colon X^*\to X^*$ be a B-associative function satisfying $\ran(F_n)\subseteq X$ for every $n\geqslant 1$. Then, for every $n\geqslant 1$, we have $F_n=f_n\circ H_n$, where $H\colon X^*\to X^*$ is the length-preserving associative function defined by $H_n(\bfx)=F_n(\bfx)^n$ for every $n\geqslant 1$ and $f_n\colon\ran(H_n)\to X$ is the one-to-one function defined by $f_n(x^n)=x$ for every $n\geqslant 1$.
\end{corollary}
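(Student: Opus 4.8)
The plan is to obtain the corollary as a direct specialization of Example~\ref{ex:5dfdas}, using one particular choice of quasi-inverse. First I would record the two facts from the discussion preceding the statement: a B-associative string function is B-preassociative, and when moreover $\ran(F_n)\subseteq X$ for every $n\geqslant 1$, each $F_n$ is $1$-quasi-range-idempotent, so that $\ran(\delta_{F_n})=\ran(F_n)$. The only computation needed at this stage is to feed the whole argument string into the B-associativity equation with empty prefix and suffix: this yields $F_n(\bfx)=F_n(F_n(\bfx)^n)=\delta_{F_n}(F_n(\bfx))$ for every $\bfx\in X^n$, hence $\delta_{F_n}\circ F_n=F_n$ and $\delta_{F_n}$ restricts to the identity on $\ran(F_n)=\ran(\delta_{F_n})$. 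From this one reads off that $\id|_{\ran(F_n)}\in Q(\delta_{F_n})$.

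Next I would apply Example~\ref{ex:5dfdas} with the choice $h_n=\id|_{\ran(F_n)}\in Q(\delta_{F_n})$. Then $g_n(x)=h_n(x)^n=x^n$ for every $x\in\ran(F_n)$, and $g_n\in Q(F_n)$, so that by Theorem~\ref{thm:fa7sfds} the string function $H$ defined by $H_0(\varepsilon)=\varepsilon$ and $H_n=g_n\circ F_n$ is associative and length-preserving, and $F_n=f_n\circ H_n$ with $f_n=F_n|_{\ran(H_n)}$ one-to-one. By construction $H_n(\bfx)=g_n(F_n(\bfx))=F_n(\bfx)^n$, which is exactly the announced form of $H$.

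Finally I would identify $f_n$ explicitly. Since $\ran(H_n)=\{F_n(\bfx)^n:\bfx\in X^n\}=\{x^n:x\in\ran(F_n)\}$, evaluating $f_n$ on such a string gives $f_n(x^n)=F_n(x^n)=\delta_{F_n}(x)=x$, using that $\delta_{F_n}$ is the identity on $\ran(F_n)$. This is precisely the map $x^n\mapsto x$ of the statement, and its one-to-oneness is already part of Theorem~\ref{thm:fa7sfds} (alternatively, $x\mapsto x^n$ is a bijection from $\ran(F_n)$ onto $\ran(H_n)$ with inverse $f_n$). I do not expect a genuine obstacle; the one point that must be handled with care is the verification that $\id|_{\ran(F_n)}$ is a bona fide quasi-inverse of $\delta_{F_n}$, and this is exactly where the $1$-quasi-range-idempotence of $F_n$ — i.e. the equality $\ran(\delta_{F_n})=\ran(F_n)$ — is used.
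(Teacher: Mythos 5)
Your proposal is correct and follows essentially the same route as the paper: it observes that B-associativity with empty prefix and suffix gives $\delta_{F_n}\circ F_n=F_n$, hence $\id|_{\ran(F_n)}\in Q(\delta_{F_n})$, and then specializes Example~\ref{ex:5dfdas} with $h_n=\id|_{\ran(F_n)}$ to get $g_n(x)=x^n$, $H_n(\bfx)=F_n(\bfx)^n$, and $f_n(x^n)=\delta_{F_n}(x)=x$. The explicit verification of the quasi-inverse property and the identification of $f_n$ are carried out carefully and match the paper's (terser) derivation.
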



We end this section by an additional application of Theorem~\ref{thm:fa7sfds}.

\begin{definition}
We say that a function $F\colon X^*\to Y$ has a \emph{componentwise defined kernel} if there exists a family $\{E_n:n\geqslant 1\}$ of equivalence relations on $X$ such that for any $n\geqslant 1$ and any $\bfx,\bfy\in X^n$, we have $F(\bfx)=F(\bfy)$ if and only if $(x_i,y_i)\in E_i$ for $i=1,\ldots,n$. In this case, we say that the family $\{E_n:n\geqslant 1\}$ \emph{defines the kernel of $F$ componentwise}.
\end{definition}

This concept can be interpreted, e.g., in decision making, as follows. A function $F\colon X^*\to Y$ has a componentwise defined kernel if the equivalence between two $n$-profiles $\bfx,\bfy\in X^n$ can be defined attributewise.

The following proposition and corollary give characterizations of those B-preassociative functions which have a componentwise defined kernel.

\begin{proposition}\label{prop:fa6s}
Assume AC and let $F\colon X^*\to Y$ have a kernel defined componentwise by the family $\{E_n:n\geqslant 1\}$ of equivalence relations on $X$. Then $F$ is B-preassociative if and only if $E_n\subseteq E_{n+1}$ for every $n\geqslant 1$.
\end{proposition}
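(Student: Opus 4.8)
The plan is to prove the two directions separately, exploiting the explicit description of $\ker(F_n)$ in terms of the $E_n$. First I would establish the easy direction: suppose $E_n\subseteq E_{n+1}$ for every $n\geqslant 1$; I want to show $F$ is B-preassociative. Take $\bfx,\bfy,\bfy',\bfz\in X^*$ with $|\bfy|=|\bfy'|$ and $F(\bfy)=F(\bfy')$; write $p=|\bfx|$, $k=|\bfy|=|\bfy'|$, $q=|\bfz|$, so $F(\bfx\bfy\bfz)$ and $F(\bfx\bfy'\bfz)$ are both values of $F_{p+k+q}$. Since $F(\bfy)=F(\bfy')$ and $\bfy,\bfy'\in X^k$, the componentwise description gives $(y_i,y_i')\in E_i$ for $i=1,\dots,k$. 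Now iterating the inclusion $E_n\subseteq E_{n+1}$ upward gives $E_i\subseteq E_{p+i}$ for every $i$, hence $(y_i,y_i')\in E_{p+i}$ for $i=1,\dots,k$; together with the trivial relations $(x_j,x_j)\in E_j$ and $(z_\ell,z_\ell)\in E_{p+k+\ell}$ (each $E_m$ being an equivalence relation, hence reflexive), the componentwise characterization applied to $F_{p+k+q}$ yields $F(\bfx\bfy\bfz)=F(\bfx\bfy'\bfz)$. This is essentially a bookkeeping argument about reindexing the coordinates.

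For the converse, assume $F$ is B-preassociative; I want $E_n\subseteq E_{n+1}$ for every $n\geqslant 1$. Fix $n\geqslant 1$ and $(x,y)\in E_n$. The natural move is to realize the pair $(x,y)$ at coordinate $n$ inside a string of length $n$ and then prepend one letter to push it to coordinate $n$ inside a string of length $n+1$. Concretely, pick any $\bfu\in X^{n-1}$ and consider $\bfy=\bfu x$ and $\bfy'=\bfu y$ in $X^n$; since $(x,y)\in E_n$ and the first $n-1$ coordinates agree (reflexivity of $E_1,\dots,E_{n-1}$), the componentwise description gives $F(\bfy)=F(\bfy')$. Now pick any $w\in X$ and apply B-preassociativity with $\bfx=w$, $\bfz=\varepsilon$: from $|\bfy|=|\bfy'|$ and $F(\bfy)=F(\bfy')$ we get $F(w\,\bfu\,x)=F(w\,\bfu\,y)$. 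Both sides are values of $F_{n+1}$ on strings that agree in the first $n$ coordinates and have $x$, resp.\ $y$, in coordinate $n+1$; the componentwise characterization of $\ker(F_{n+1})$ then forces $(x,y)\in E_{n+1}$. Hence $E_n\subseteq E_{n+1}$.

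The main subtlety to be careful about is that the componentwise characterization is a biconditional, so in the converse direction I am free to choose the padding letters $\bfu$ and $w$ arbitrarily (the statement $F(\bfx)=F(\bfy)$ iff $(x_i,y_i)\in E_i$ holds for \emph{all} $\bfx,\bfy$, so any fixed choice works); I should just note that $X$ is nonempty so such letters exist. I expect no genuine obstacle here — the two directions are short and the only thing that needs care is writing the reindexing cleanly, in particular the passage $E_i\subseteq E_{p+i}$ obtained by applying the hypothesis $E_m\subseteq E_{m+1}$ repeatedly $p$ times. Note that AC plays no visible role in this particular proposition (it is inherited from the ambient standing assumptions and from the fact that having a componentwise defined kernel already fixes $\ker(F_n)$), so I would not invoke it in the argument itself.
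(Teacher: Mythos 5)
Your proof is correct and follows essentially the same route as the paper's: the converse direction pads the pair $(x,y)$ into length-$n$ strings and prepends one letter (the paper simply takes $\bfu=x^{n-1}$ and $w=x$, giving $F(x^{n+1})=F(x^ny)$), and the forward direction is the same reindexing argument using $E_i\subseteq E_{p+i}$, which the paper merely splits into a right-concatenation step and a left-concatenation step. No gaps; your remark that AC is not actually used here is also accurate.
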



\begin{proof}
Let $F\colon X^*\to Y$ be defined as indicated in the statement. For the necessity, suppose that $F$ is B-preassociative and let $(x,y)\in E_n$ for some $n\geqslant 1$. Then we have $F(x^n)=F(x^{n-1}y)$ and hence $F(x^{n+1})=F(x^ny)$ by B-preassociativity. It follows that $(x,y)\in E_{n+1}$. For the sufficiency, for any $n\geqslant 1$ and any $\bfx, \bfy \in X^n$ such that $F(\bfx)=F(\bfy)$, we have $F(\bfx\bfz)=F(\bfy\bfz)$ for every $\bfz \in X^*$ by definition of $F$. Since $E_n\subseteq E_{n+1}$ for every $n\geqslant 1$, we also have $F(\bfz\bfx)=F(\bfz\bfy)$ for every $\bfz \in X^*$. Therefore $F$ is B-preassociative.
\end{proof}

\begin{corollary}
Assume AC and let $F\colon X^* \to Y$ be a function. The following assertions are equivalent.
\begin{enumerate}
\item[(i)] F is B-preassociative and has a componentwise defined kernel.

\item[(ii)] There exists a sequence $(h_n)_{n\geqslant 1}$ of unary operations on $X$ and a sequence $(f_n)_{n\geqslant 1}$ of one-to-one maps $f_n\colon\{h_1(x_1)\cdots h_n(x_n): x_1\cdots x_n \in X^n\} \to Y$ such that $h_n\circ h_n=h_n$, $h_{n+1}\circ h_n=h_{n+1}$, and $F_n(\bfx)=f_n(h_1(x_1)\cdots h_n(x_n))$ for every $n\geqslant 1$ and every $\bfx \in X^n$.
\end{enumerate}
\end{corollary}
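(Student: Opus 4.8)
The plan is to derive this corollary by combining the factorization Theorem~\ref{thm:fa7sfds} with the analysis of componentwise maps carried out in Example~\ref{ex:68vffd}, together with Proposition~\ref{prop:fa6s}. The key observation is that a string function of the form $H_n(x_1\cdots x_n)=h_1(x_1)\cdots h_n(x_n)$ is precisely the sort of length-preserving associative function produced by Theorem~\ref{thm:fa7sfds} when the kernel of $F$ is componentwise defined, and conversely that such an $H$ automatically has a componentwise defined kernel.

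For the implication (ii) $\Rightarrow$ (i), I would first note that the conditions $h_n\circ h_n=h_n$ and $h_{n+1}\circ h_n=h_{n+1}$ are exactly those of Example~\ref{ex:68vffd}, so the length-preserving function $H\colon X^*\to X^*$ defined by $H_0(\varepsilon)=\varepsilon$ and $H_n(\bfx)=h_1(x_1)\cdots h_n(x_n)$ is associative. Since $f_n$ is one-to-one on $\ran(H_n)$ and $F_n=f_n\circ H_n$, Theorem~\ref{thm:fa7sfds} yields that $F$ is B-preassociative. To see that $F$ has a componentwise defined kernel, set $E_n=\{(x,y)\in X^2:h_n(x)=h_n(y)\}$, which is an equivalence relation on $X$; then for $\bfx,\bfy\in X^n$ the chain $F(\bfx)=F(\bfy)\iff H_n(\bfx)=H_n(\bfy)\iff h_i(x_i)=h_i(y_i)\ (1\le i\le n)\iff (x_i,y_i)\in E_i\ (1\le i\le n)$ shows that $\{E_n:n\geqslant 1\}$ defines the kernel of $F$ componentwise.

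For (i) $\Rightarrow$ (ii), let $\{E_n:n\geqslant 1\}$ define the kernel of $F$ componentwise. By Proposition~\ref{prop:fa6s}, B-preassociativity of $F$ forces $E_n\subseteq E_{n+1}$ for every $n\geqslant 1$. Using AC, for each $n$ choose a set of representatives of the $E_n$-classes and let $h_n\colon X\to X$ send each $x$ to the representative of its $E_n$-class. Then $h_n$ is idempotent, and since $E_n\subseteq E_{n+1}$ the element $h_n(x)$ lies in the same $E_{n+1}$-class as $x$, whence $h_{n+1}\circ h_n=h_{n+1}$. Defining $H_n(\bfx)=h_1(x_1)\cdots h_n(x_n)$, we get $H_n(\bfx)=H_n(\bfy)$ iff $h_i(x_i)=h_i(y_i)$ for all $i$ iff $(x_i,y_i)\in E_i$ for all $i$ iff $F(\bfx)=F(\bfy)$, so $\ker(H_n)=\ker(F_n)$. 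Consequently the assignment $f_n\colon H_n(\bfx)\mapsto F_n(\bfx)$ is a well-defined one-to-one map from $\ran(H_n)=\{h_1(x_1)\cdots h_n(x_n):\bfx\in X^n\}$ into $Y$ with $F_n=f_n\circ H_n$, which is the desired factorization. (Alternatively, one can first extract an abstract associative length-preserving $H$ from Theorem~\ref{thm:fa7sfds} and then argue that, since $\ker(F_n)$ is a product of the $E_i$, the function $H$ may be replaced by one of the componentwise form above.)

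The part I expect to require the most care is the compatibility condition $h_{n+1}\circ h_n=h_{n+1}$ in the direction (i) $\Rightarrow$ (ii): it is exactly here that B-preassociativity enters, via the nesting $E_n\subseteq E_{n+1}$ of Proposition~\ref{prop:fa6s}, so the choice of representative maps must be organized to respect this inclusion. Everything else—idempotency of the $h_n$, the identification of $\ker(H_n)$ with $\ker(F_n)$, and the appeals to Example~\ref{ex:68vffd} and Theorem~\ref{thm:fa7sfds}—is then routine.
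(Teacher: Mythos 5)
Your proof is correct and follows essentially the same route as the paper: both directions rest on Example~\ref{ex:68vffd} for the associativity of the componentwise $H$, and in (i) $\Rightarrow$ (ii) you build $h_n$ from representatives of the $E_n$-classes, with the nesting $E_n\subseteq E_{n+1}$ from Proposition~\ref{prop:fa6s} yielding $h_{n+1}\circ h_n=h_{n+1}$, exactly as the authors do. The only cosmetic difference is that the paper obtains the factorization $F_n=f_n\circ H_n$ by exhibiting $g_n(F(\bfx))=h_1(x_1)\cdots h_n(x_n)$ as a quasi-inverse of $F_n$ and invoking Theorem~\ref{thm:fa7sfds}, whereas you verify $\ker(H_n)=\ker(F_n)$ and define $f_n$ directly; the resulting maps are identical.
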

\begin{proof}

(i) $\Rightarrow$ (ii). By Proposition~\ref{prop:fa6s}, the kernel of $F$ is defined by some family of equivalence relations $\{E_n : n\geqslant 1\}$ on $X$ satisfying $E_n \subseteq E_{n+1}$ for every $n\geqslant 1$. For every $c\in X/E_n$, let $s_n(c)\in c$ be a representative of $c$ and define the map $h_n\colon X\to X$ by $h_n(x)=s_n(x/E_n)$. The map $g_n\colon\ran(F_n)\to X^n$ defined by $g_n(F(\bfx))=h_1(x_1)\cdots h_n(x_n)$ is a quasi-inverse of $F_n$. Indeed, since $(x_i,h_i(x_i))\in E_i$ for every $\bfx\in X^n$ and every $i\in\{1,\ldots,n\}$, we have
$$
(F_n\circ g_n\circ F_n)(x_1\cdots x_n) ~=~ F_n(h_1(x_1)\cdots h_n(x_n)) ~=~ F_n(x_1\cdots x_n).
$$

By Theorem~\ref{thm:fa7sfds}, setting $H_n=g_n \circ F_n$ for every $n\geqslant 1$, there is a one-to-one function $f_n\colon\ran(H_n) \to Y$ such that $F_n=f_n \circ H_n$ and such that the map $H\colon X^* \to X^*$ obtained by setting $H_0(\varepsilon)=\varepsilon$ is associative and length-preserving. The conclusion follows from Example~\ref{ex:68vffd}.

(ii) $\Rightarrow$ (i) By Example \ref{ex:68vffd} and Proposition \ref{prop:s7f65} we obtain that $F$ is B-preassociative. Moreover, the kernel of $F$ is defined by the family $\{\ker(h_i) : i \geqslant 1\}$ of equivalence relations on $X$.
\end{proof}

\section*{Acknowledgments}

This research is supported by the internal research project F1R-MTH-PUL-15MRO3 of the University of Luxembourg.


\end{document}